\newcommand{\esssup}{\mathop{\mathrm{esssup}}}
\numberwithin{equation}{section}
\title{\bf A Risk-Sensitive Global Maximum Principle for Controlled Fully Coupled FBSDEs with Applications  \thanks{This work is supported by National Key Research $\&$ Development Program of China (2022YFA1006104), National Natural Science Foundations of China (11971266, 11831010), and Shandong Provincial Natural Science Foundations (ZR2022JQ01, ZR2020ZD24, ZR2019ZD42).}}
\author{\normalsize  Jingtao Lin\thanks{\it School of Mathematics, Shandong University, Jinan 250100, P.R. China, E-mail: linjingtao@mail.sdu.edu.cn} , Jingtao Shi\thanks{Corresponding author. \it School of Mathematics, Shandong University, Jinan 250100, P.R. China, E-mail: shijingtao@sdu.edu.cn}}
\newtheorem{mythm}{Theorem}[section]
\newtheorem{mylem}{Lemma}[section]
\newtheorem{Remark}{Remark}[section]
\newtheorem{myassum}{Assumption}
\begin{document}
\maketitle

\noindent{\bf Abstract:}\quad This paper is concerned with a kind of risk-sensitive optimal control problem for fully coupled forward-backward stochastic systems. The control variable enters the diffusion term of the state equation and the control domain is not necessarily convex. A new global maximum principle is obtained without assuming that the value function is smooth. The maximum condition, the first- and second-order adjoint equations heavily depend on the risk-sensitive parameter. An optimal control problem with a fully coupled linear forward-backward stochastic system and an exponential-quadratic cost functional is discussed. The optimal feedback control and optimal cost are obtained by using Girsanov's theorem and completion-of-squares approach via risk-sensitive Riccati equations. A local solvability result of coupled risk-sensitive Riccati equations is given by Picard-Lindel$\ddot{o}$f's Theorem.

\vspace{2mm}

\noindent{\bf Keywords:}\quad forward-backward stochastic differential equation, risk-sensitive control, maximum principle, linear-exponential-quadratic control.

\vspace{2mm}

\noindent{\bf Mathematics Subject Classification:}\quad93E20,\quad60H10,\quad 35K15

\section{Introduction}

Originating from the concept of utility, risk sensitivity is widely related to mathematical finance, such as optimal investment (Fleming and Sheu \cite{FS99,FS00}), asset management (Bielecki and Pliska \cite{BP99}, Davis and Lleo \cite{DL11}) and portfolio optimization (Nagai and Peng \cite{NP02}, Nagai \cite{Nagai03}), etc. When taking into account the controller's risk preference, it is natural to consider the risk-sensitive optimal control problems, where usually some risk-sensitive parameters/indices are introduced. Risk-sensitive optimal control problems have close connections with linear-exponential-quadratic Gaussian (LEQG, for short) problems (Jacobson \cite{Jacobson73}, Whittle \cite{Whittle81}, Bensoussan and van Schuppen \cite{BVS85}, Duncan \cite{Duncan13}), robust $H_\infty$ control problems (Ba\c{s}ar \cite{Basar99}, Lim and Zhou \cite{LZ01}) and differential games (James \cite{James92}). The topic of risk-sensitive optimal control problems remains one of most active research fields in optimal control theory for a long time.

Among others, Pontryagin's maximum principle is a useful tool to solve risk-sensitive optimal control problems. Pioneer work for stochastic systems can be seen in Whittle \cite{Whittle90, Whittle91}, Charalambous and Hiley \cite{CH96-1,CH96-2}. Assuming the smoothness of the value function and using the relation between the global stochastic maximum principle of Peng \cite{Peng90} and the dynamic programming principle of Yong and Zhou \cite{YZ99}, Lim and Zhou \cite{LZ05} established a global maximum principle for the risk-sensitive optimal control problem, with the following exponential-of-integral cost functional
\begin{equation}\label{Risk-sensitive cost functional of LimZhou2005}
	J^\theta(u(\cdot))=\mathbb{E}\Big[e^{\theta\big[\int_0^Tf(t,x(t),u(t))\,dt+\phi(x(T))\big]}\Big],
\end{equation}
where $\theta\in\mathbb{R}$ is the risk-sensitive index. The state process $x(\cdot)$ follows a controlled stochastic differential equation (SDE, for short):
\begin{equation}\label{state of LimZhou2005}
\left\{
	\begin{aligned}
		dx(t)&=b(t,x(t),u(t))\,dt+\sigma(t,x(t),u(t))\,dW(t),\quad t\in[0,T],\\
		x(0)&=x_0,
	\end{aligned}
	\right.
\end{equation}
and the control domain $U$ is not necessarily convex. The result in \cite{LZ05} is applied to an optimal portfolio choice problem by Shi and Wu \cite{SW12}, and extended to the jump-diffusion case by Shi and Wu \cite{SW11}. By introducing a non-linear transformation from El Karoui and Hamad\`{e}ne \cite{EH03}, Djehiche et al. \cite{DTT15} eliminated the smoothness assumption of the value function, and obtained a risk-sensitive global maximum principle for optimal control of SDEs of mean-field type, where the expectations of the state and control variables are included in the drift and diffusion terms. The result in \cite{LZ05} is also extended to the optimal control of backward SDEs (BSDEs, for short) by Chala \cite{Chala17-1}, where a risk-sensitive local maximum principle is proved. (The word ``local" here means that the control domain is assumed to be convex.) In \cite{Chala17-1}, the controlled BSDE is of the following Pardoux-Peng's form
\begin{equation}\label{BSDE of Chala2017}
\left\{
\begin{aligned}
-dy(t)&=-g(t,y(t),z(t),u(t))\,dt-z(t)\,dW(t),\quad t\in[0,T],\\
  y(T)&=\xi,
\end{aligned}	
\right.
\end{equation}
and the exponential-of-integral cost functional is the following
\begin{equation}\label{Risk-sensitive cost functional of Chala2017}
	J^\theta(u(\cdot))=\mathbb{E}\Big[e^{\theta\big[\int_0^Tf(t,y(t),z(t),u(t))\,dt+\psi(y(0))\big]}\Big].
\end{equation}
Some sufficient conditions are supplemented in Chala \cite{Chala17-2}.

Recently, motivated by some cash flow modeling and controlling problems, Khallout and Chala \cite{KC20} considered a risk-sensitive optimal control problem, where the state equation is the following controlled fully coupled forward-backward SDE (FBSDE, for short):
\begin{equation}\label{FBSDE of Khallout and Chala 2020}
	\left\{
	\begin{aligned}
		dx(t)&=b(t,x(t),y(t),z(t),u(t))\,dt+\sigma(t,x(t),y(t),z(t),u(t))\,dW(t),\\
		-dy(t)&=g(t,x(t),y(t),z(t),u(t))\,dt-z(t)\,dW(t),\quad t\in[0,T],\\
		x(0)&=x_0,\quad y(T)=\xi,
	\end{aligned}
	\right.
\end{equation}
with the exponential-of-integral cost functional
\begin{equation}\label{Risk-sensitive cost functional of Khallout and Chala 2020}
	J^\theta(u(\cdot))=\mathbb{E}\Big[e^{\theta\big[\int_0^Tf(t,x(t),y(t),z(t),u(t))\,dt+\phi(x(T))+\psi(y(0))\big]}\Big].
\end{equation}
By assuming that the control domain is convex, necessary and sufficient maximum principles are obtained. Results for mean-field type control is soon given in Chala and Hafayed \cite{CH20}. Both these results generalized the pioneer work of Wu \cite{Wu98} from risk-neutral case to risk-sensitive case. Risk-sensitive zero-sum stochastic differential games are studied by Moon et al. \cite{MDB19}.

When the control domain is not necessarily convex, Moon \cite{Moon20} extended previous works of Yong \cite{Yong10} and Wu \cite{Wu13} from risk-neutral case to the risk-sensitive case. However, unknown parameters are unavoidable when applying Ekeland's variational principle to treat some embedded problems with state constraints, which means that the results in \cite{Moon20} are not ideal.

Recently, Hu et al. \cite{HJX18} obtained a novel global maximum principle for optimal control of fully coupled forward-backward stochastic systems, where the state equation is the same as (\ref{FBSDE of Khallout and Chala 2020}), with the recursive cost functional, that is,
\begin{equation}\label{cost functional of HJX2018}
	J(u(\cdot))=y(0).
\end{equation}
The main techniques in \cite{HJX18} is a new first- and second-order spike variational method which was first introduced by Hu \cite{Hu17} and a new decoupling relation between first- and second-order variations of $x(\cdot)$, $y(\cdot)$ and $z(\cdot)$. For ease of use, very recently, Lin and Shi \cite{LS-CCDC2023submission} extended the above result from the recursive cost functional (\ref{cost functional of HJX2018}) to the case with the cost functional of the following general form:
\begin{equation}\label{cost functional of Lin and Shi CCDC2023submission}
	J(u(\cdot))=\mathbb{E}\bigg[\int_0^Tl(t,x(t),y(t),z(t),u(t))\,dt+f(x(T),y(0))\bigg].
\end{equation}

Motivated by the above literatures, in this paper, we consider the global maximum principle for the risk-sensitive optimal control problem, with the controlled fully coupled FBSDE:
\begin{equation}\label{state of this paper}
	\left\{
	\begin{aligned}
		dX(t)&=b(t,X(t),Y(t),Z(t),u(t))\,dt+\sigma(t,X(t),Y(t),Z(t),u(t))\,dW(t),\\
		-dY(t)&=g(t,X(t),Y(t),Z(t),u(t))\,dt-Z(t)\,dW(t),\quad t\in[0,T],\\
		X(0)&=x_0,\quad Y(T)=\phi(X(T)),
	\end{aligned}
	\right.
\end{equation}
and the general exponential-of-integral cost functional:
\begin{equation}\label{cost functional of this paper}
	J^\theta(u(\cdot))=\mathbb{E}\Big[e^{\theta\big[\int_0^Tl(t,X(t),Y(t),Z(t),u(t))\,dt+f(X(T),Y(0))\big]}\Big].
\end{equation}
To the best of the authors' knowledge, the risk-sensitive global maximum principle for fully coupled FBSDEs obtained in this paper has not been studied in the existing literature. In Section 3 of this paper, new first- and second-order adjoint equations are obtained. Especially, the new first-order adjoint equation (\ref{First-order Risk-sensitive Adjoint Equation(mnk)}) is a non-coupled FBSDE.

For linear-quadratic (LQ, for short) optimal control problems in risk-neutral case, there exists a long list of literatures. However, for LQ optimal control problems of fully coupled forward-backward stochastic systems, few papers exist, as far as we know, except Wang et al. \cite{WGW03}, Shi and Wu \cite{SW09,SW10}, Huang and Shi \cite{HS12}, and Hu et al. \cite{HJX18} for some special examples. Very recently, motivated by optimization under rational expectations, Hu et al. \cite{HJX22} gave a systematic research for the LQ optimal control problem of fully coupled forward-backward stochastic systems, and a new decoupling technique is proposed to obtain the feedback optimal control.

Linear-exponential-quadratic (LEQ, for short) problem, introduced by Jacobson\cite{Jacobson73}, is the risk-sensitive counterpart of LQ optimal control problems. The reader is referred to Djehiche et al. \cite{DTT15}, Duncan \cite{Duncan13}, Lim and Zhou \cite{LZ05}, Moon et al. \cite{MDB19} , Shi and Wu \cite{SW11}, Ma and Wang \cite{MW-22} and the references therein for the theory of LEQ problem and its wide ranges of applications. In Section 5 of this paper, distinguished from the above literature reviewed, we consider an LEQ problem of fully coupled forward-backward stochastic systems. Especially, all square terms and cross terms of $X$, $Y$, $Z$ and $u$ can appear in the cost functional. Inspired by Djehiche et al. \cite{DTT15} and Duncan \cite{Duncan13}, the optimal feedback control and optimal cost are obtained by using Girsanov's Theorem and completion-of-squares approach via risk-sensitive Riccati equations.

Compared with the existing results, the innovation of this paper can be summarized along the following lines:
\begin{itemize}
	\item The control system is a fully coupled FBSDE, the cost functional is a general exponential-of-integral form, the control domain is not necessarily convex, and the assumption of smoothness of value function is not needed.
	\item By introducing a non-linear transformation, new first- and second-order adjoint equations, which depend on the risk-sensitive parameter, are obtained. Especially, the first-order adjoint equations are decoupled forward-backward SDEs, which are more useful in practice.
	\item The global maximum principle does not contain unknown parameters, which suggests that our result is more convenient to apply.
    \item An LEQ problem of fully coupled linear forward-backward stochastic systems is discussed. All square terms and cross terms of $X$, $Y$, $Z$ and $u$ can appear in the cost functional. The optimal feedback control and optimal cost are obtained by using Girsanov's Theorem and completion-of-squares approach via risk-sensitive Riccati equations. A local solvability result of coupled risk-sensitive Riccati equations (\ref{LEQ-alpha1}) and (\ref{LEQ-beta1}) is given by Picard-Lindel$\ddot{o}$f's Theorem.
\end{itemize}

The rest of this paper is organized as follows. In Section 2, we state the problem and assumptions. The main result is established in Section 3 and its detailed proof is given in Section 4. In Section 5, an LEQ problem
of fully coupled forward-backward stochastic systems is discussed. Finally, some concluding remarks are given in Section 6.

{\bf Notations.} In this paper, the $n$-dimensional Euclidian space is denoted by $\mathbb{R}^n$, and $x^\top$ is the transpose of $x\in\mathbb{R}^n$. The inner product of $x,y\in\mathbb{R}^n$ is denoted by $\langle x,y\rangle$, and the norm of $x\in \mathbb{R}^n$ is denoted by $|x|=\langle x,x\rangle^{\frac{1}{2}}$. Let $\mathcal{S}^n$, $\mathcal{S}^n_+$, $\bar{\mathcal{S}}^n_+$ be the spaces of $n\times n$ dimensional symmetric, symmetric positive definite, symmetric positive semi-definite matrices, respectively. $D\psi$ is the gradient of the function $\psi(x,y,z)$ with respect to $x,y,z$, and $D^2\psi$ is the Hessian matrix of it with respect to $x,y,z$.

\section{Problem formulation}

Let $T\in(0,+\infty)$ be fixed and $(\Omega,\mathcal{F},\mathbb{P},\left\{\mathcal{F}_t\right\}_{t\geq0})$ be a complete filtered probability space on which a one-dimensional standard Brownian motion $W(\cdot)$ is defined such that $\mathbb{F}:=\left\{\mathcal{F}_t\right\}_{t\geq0}$ is the natural filtration generated by $W(\cdot)$, augmented by all $\mathbb{P}$-null sets in $\mathcal{F}$. We introduced some notations of spaces used throughout this paper.

$L^p_{\mathcal{F}_T}(\Omega;\mathbb{R}^n)$: the space of $\mathcal{F}_T$-measurable $\mathbb{R}^n$-valued random variables $\eta$ such that
$$
||\eta||_p:=\big(\mathbb{E}[|\eta|^p]\big)^{\frac{1}{p}}<\infty.
$$

$L^p_{\mathcal{F}}([0,T];\mathbb{R}^n)$: the space of $\mathbb{F}$-adapted and $p$th integrable stochastic processes $f(\cdot)$ on $[0,T]$ such that
$$
\mathbb{E}\bigg[\int_0^T|f(t)|^p\,dt\bigg]<\infty.
$$

$L^\infty_{\mathcal{F}}([0,T];\mathbb{R}^n)$: the space of $\mathbb{F}$-adapted and uniformly bounded stochastic processes on $[0,T]$ such that
$$
||f||_\infty:=\esssup\limits_{(t,\omega)\,\in[0,T]\times\Omega}|f(t)|<\infty.
$$

$L^p_{\mathcal{F}}(\Omega;C([0,T];\mathbb{R}^n))$: the space of $\mathbb{F}$-adapted continuous stochastic processes $f(\cdot)$ on $[0,T]$ such that
$$
\mathbb{E}\bigg[\sup\limits_{0\leq t\leq T}|f(t)|^p\bigg]<\infty.
$$

In this paper, we study a risk-sensitive optimal control problem for a fully coupled forward-backward stochastic system. The state equation is:
\begin{equation}\label{stateEquation}
	\left\{
	\begin{aligned}
		dX(t)&=b(t,X(t),Y(t),Z(t),u(t))\,dt+\sigma(t,X(t),Y(t),Z(t),u(t))\,dW(t),\\
		-dY(t)&=g(t,X(t),Y(t),Z(t),u(t))\,dt-Z(t)\,dW(t),\quad t\in[0,T],\\
		X(0)&=x_0,\quad Y(T)=\phi(X(T)),
	\end{aligned}
	\right.
\end{equation}
where $(X(\cdot),Y(\cdot),Z(\cdot))$ is the state process triple valued in $\mathbb{R}^n\times\mathbb{R}\times\mathbb{R}$.
The coefficients $b,\sigma:[0,T]\times\mathbb{R}^n\times\mathbb{R}\times\mathbb{R}\times U\to\mathbb{R}^n$, $g:[0,T]\times\mathbb{R}^n\times\mathbb{R}\times\mathbb{R}\times U\to\mathbb{R}$ and $\phi:\mathbb{R}^n\to\mathbb{R}$ are given functions. For $b=b,\sigma$, set $b(\cdot)=(b^{(1)}(\cdot),b^{(2)}(\cdot),\cdots,b^{(n)}(\cdot))^\top\in\mathbb{R}^n$.

An admissible control $u(\cdot)$ is an $\mathbb{F}$-adapted process valued in the nonempty control domain $U\subseteq\mathbb{R}^k$ such that
$$
\sup\limits_{0\leq t\leq T}\mathbb{E}\big[|u(t)|^8\big]<\infty.
$$
Denote the admissible control set by $\mathcal{U}[0,T]$.

In risk-sensitive optimal control problems, it is taken into account the risk attitude (risk-seeking, risk-averse or risk-neutral) of the controller in choosing an appropriate criterion to minimize. In this paper, we consider the cost functional of the exponential-of integral type:
\begin{equation}\label{R-S CostFunctional}
	J^{RS}(u(\cdot)):=\mathbb{E}\Big[e^{\theta\big[\int_0^Tl(t,X(t),Y(t),Z(t),u(t))\,dt+f(X(T),Y(0))\big]}\Big],
\end{equation}
where $l:[0,T]\times\mathbb{R}^n\times\mathbb{R}\times\mathbb{R}\times U\to\mathbb{R}^n$, $f:\mathbb{R}^n\times\mathbb{R}\to\mathbb{R}$ are given functions, and $\theta$ is the risk-sensitive index.

Next we explain the risk-sensitive index $\theta$ in detail. Define
$$L_T(u(\cdot)):=\int_0^Tl(t,X(t),Y(t),Z(t),u(t))\,dt+f(X(T),Y(0)),$$
then the certainty-equivalent expectation is given by
\begin{equation*}
	\begin{aligned}
		\Theta_{\theta}(u(\cdot))&:=\frac{1}{\theta}\ln J^{RS}(u(\cdot))=\frac{1}{\theta}\ln\mathbb{E}\big[e^{\theta L_T(u(\cdot))}\big].
	\end{aligned}
\end{equation*}
For a sufficiently small $\theta$, $\Theta_{\theta}(u(\cdot))$ has the following Taylor's expansion:
\begin{equation*}
	\Theta_{\theta}(u(\cdot))=\mathbb{E}\big[L_T(u(\cdot))\big]+\frac{\theta}{2}\mbox{var}\big[L_T(u(\cdot))\big]+o(\theta).
\end{equation*}
If $\theta<0$, the term $\mbox{var}\big[\Theta_T(u(\cdot))\big]$, as a measure of risk, improves the performance $\Theta_{\theta}(u(\cdot))$. Hence, in this case, the controller is called risk-seeking. Similarly, $\theta>0$ corresponds to the risk-averse controller.

In this paper, the problem is to find an optimal control $\bar{u}(\cdot)$ over $\mathcal{U}[0,T]$ to optimize the cost functional (\ref{R-S CostFunctional}), i.e.,
\begin{equation}\label{RS problem}
J^{RS}(\bar{u}(\cdot))=\inf\limits_{u(\cdot)\in\,\mathcal{U}[0,T]}J^{RS}(u(\cdot)).
\end{equation}
The corresponding $(\bar{X}(\cdot),\bar{Y}(\cdot),\bar{Z}(\cdot))$ is called the optimal state process triple, and $(\bar{X}(\cdot),\bar{Y}(\cdot)$, $\bar{Z}(\cdot),\bar{u}(\cdot))$ is the optimal quadruple.

We need the following hypotheses.

\begin{myassum}\label{SeparableSubset}
	\begin{itemize}
	The control domain $U$ is a nonempty separable subset of $\mathbb{R}^k$.
	\end{itemize}
\end{myassum}

\begin{myassum}\label{AssumSolvableOfStateEquation}
	\begin{itemize}
		\item[(i)] For $b=b$, $\sigma$, $g$, $\phi$, all $b$, $b_x$, $b_y$, $b_z$ are continuous in $(x,y,z,u)$; $b_x$, $b_y$, $b_z$ are bounded; there exists some constant $L>0$ such that for any $t\in[0,T]$, $x\in\mathbb{R}^n,$ $y\in\mathbb{R}$, $z\in\mathbb{R}$ and $u,u'\in U$, it holds that
		\begin{equation*}
			\begin{array}{cr}
				|b(t,x,y,z,u)|\le L(1+|x|+|y|+|z|+|u|),\\
				|\sigma(t,0,0,z,u)-\sigma(t,0,0,z,u')|\leq L(1+|u|+|u'|).
			\end{array}
		\end{equation*}
        \item[(ii)] $f$ is continuous in $(x,y)$, and there exists some constant $L>0$ such that for any $x\in\mathbb{R}^n,$ $y\in\mathbb{R}$, $|f(t,x,y)|\le L(1+|x|^2+|y|^2)$; $l$ is continuous in $(x,y,z,u)$, and there exists some constant $L>0$ such that for any $x\in\mathbb{R}^n,$ $y\in\mathbb{R}$, $z\in\mathbb{R}$ and $u\in U$, $|l(t,x,y,z,u)|\le L(1+|x|^2+|y|^2+|z|^2+|u|^2)$.
		\item[(iii)] For any $2\leq\beta\leq8$, $\Lambda_{\beta}:=C_{\beta}2^{\beta+1}(1+T^{\beta})c_1^{\beta}<1$, where $c_1=max\{L_2,L_3\}$, $L_2=max\{||b_y||_{\infty},||b_z||_{\infty},||\sigma_y||_{\infty}\}$, $L_3=||\sigma_z||_{\infty}$, $C_{\beta}$ is defined in Lemma A.1 in the appendix of \cite{HJX18} for $L_1=max\{||b_x||_{\infty},||\sigma_x||_{\infty},||g_x||_{\infty},||g_y||_{\infty},||g_z||_{\infty},||\phi_x||_{\infty}\}$.
		\item[(iv)] For $b=b$, $\sigma$, $g$, $\phi$, all $b_{xx}$, $b_{xy}$, $b_{yy}$, $b_{xz}$, $b_{yz}$, $b_{zz}$ are continuous in $(x,y,z,u)$; $b_{xx}$, $b_{xy}$, $b_{yy}$, $b_{xz}$, $b_{yz}$, $b_{zz}$ are bounded.
	\end{itemize}
\end{myassum}

For any given control $u(\cdot)\in\mathcal{U}[0,T]$, Assumption \ref{AssumSolvableOfStateEquation} guarantees that the state equation (\ref{stateEquation}) and all the first- and second-order adjoint equations in Section 3 are uniquely solvable. In fact, some of Assumption \ref{AssumSolvableOfStateEquation} can be relaxed, but we do not give details since the topic of the well-posedness of FBSDEs is not the main target of this paper.

\section{Main result}

In this section, the main result, a Pontryagin's type necessary condition of the risk-sensitive optimal control problem (\ref{RS problem}), is established. The main idea is to transfer (\ref{RS problem}) into an equivalent risk-neutral one, by introducing an auxiliary process. First, we recall a risk-neutral global MP of fully coupled FBSDEs in our accompanying paper \cite{LS-CCDC2023submission}.

\subsection{A risk-neutral global maximum principle}\label{section-SMP-RN}

In this section, we consider the state equation (\ref{stateEquation}) with the cost functional of the following Mayer's type:
\begin{equation}\label{CostFunctionalMayerType}
	J^{RN}(u(\cdot))=\mathbb{E}\big\{\varphi(X(T),Y(0))\big\},
\end{equation}
where $\varphi:\mathbb{R}^n\times\mathbb{R}\to\mathbb{R}$ is a given function satisfies the same assumptions as $f$ in Assumption \ref{AssumSolvableOfStateEquation}.

Let $(\bar{X}(\cdot),\bar{Y}(\cdot),\bar{Z}(\cdot),\bar{u}(\cdot))$ be an optimal quadruple. Let $E_{\epsilon}\subset[0,T]$ is a measurable set with $|E_{\epsilon}|=\epsilon\in(0,T)$. For any $u(\cdot)\in\mathcal{U}[0,T]$, the spike variation of $\bar{u}(\cdot)$ is defined as
\begin{equation}\label{SpikeVariation}
	u^{\epsilon}(t):=\bar{u}(t)I_{E^c_{\epsilon}}(t)+u(t)I_{E_{\epsilon}}(t).
\end{equation}
Let $(X^{\epsilon}(\cdot),Y^{\epsilon}(\cdot),Z^{\epsilon}(\cdot))$ be the corresponding state process triple. For $b=b$, $\sigma$, $g$, $\phi$, $\varphi$ and $x=x$, $y$, $z$, we introduce the following notations:
\begin{equation*}
	\begin{aligned}
		b(t)&:=\psi(t,\bar{X}(t),\bar{Y}(t),\bar{Z}(t),\bar{u}(t)),\quad b_x(t):=b_x(t,\bar{X}(t),\bar{Y}(t),\bar{Z}(t),\bar{u}(t)),\\
		Db(t)&:=Db(t,\bar{X}(t),\bar{Y}(t),\bar{Z}(t),\bar{u}(t)),\quad D^2b(t):=D^2b(t,\bar{X}(t),\bar{Y}(t),\bar{Z}(t),\bar{u}(t)),\\
		\delta b(t)&:=b(t,\bar{X}(t),\bar{Y}(t),\bar{Z}(t),u(t))-b(t),\quad \delta b_x(t):=b_x(t,\bar{X}(t),\bar{Y}(t),\bar{Z}(t),u(t))-b_x(t),\\
		\delta b(t,\Delta)&:=\psi(t,\bar{X}(t),\bar{Y}(t),\bar{Z}(t)+\Delta(t),u(t))-b(t),\\
		\delta b_x(t,\Delta)&:=b_x(t,\bar{X}(t),\bar{Y}(t),\bar{Z}(t)+\Delta(t),u(t))-b_x(t).
	\end{aligned}
\end{equation*}
In the above, $\Delta(\cdot)$ is an $\mathbb{F}$-adapted process which is the solution of the following algebra equation:
\begin{equation}\label{algebraEquation}
	\Delta(t)=p^\top(t)\delta\sigma(t,\Delta(t)),
\end{equation}
where $p(\cdot)$ is the solution of the following (\ref{first-order-adjoint(pq)}). The detail can be found in \cite{HJX18}.

We first introduce the first- and second-order decoupling fields, to describe the relation among $\bar{X}(\cdot)$, $\bar{Y}(\cdot)$ and $\bar{Z}(\cdot)$. Interested readers may refer to \cite{HJX18} and \cite{LS-CCDC2023submission} for more details. In this paper, for $\Phi(\cdot)\in\mathbb{R}^d$, $\Psi(\cdot)\in\mathbb{R}$, $x\in\mathbb{R}^n$ and $y\in\mathbb{R}$, we have the following notions:
\begin{equation*}
	\Phi_x=\begin{bmatrix}
		\Phi^1_{x_1}&  \cdots& \Phi^1_{x_n}  \\
		\vdots& \ddots &  \vdots \\
		\Phi^d_{x_1}& \cdots & \Phi^d_{x_n}  \\
	\end{bmatrix},\
\Phi_y=\begin{bmatrix}
	\Phi^1_y\\
	\vdots\\
	\Phi^d_y \\
\end{bmatrix},\
\Psi_x=\begin{bmatrix}
	\Psi_{x_1} \\
	\vdots\\
	\Psi_{x_n}\\
\end{bmatrix},\
\Psi_{xx}=\begin{bmatrix}
	\Psi_{x_1x_1}&  \cdots& \Psi_{x_1x_n}  \\
	\vdots& \ddots &  \vdots \\
	\Psi_{x_nx_1}& \cdots & \Psi_{x_nx_n}  \\
\end{bmatrix}.
\end{equation*}

We introduce the following first-order decoupling field:
\begin{equation}\label{first-order-adjoint(pq)}
	\left\{
	\begin{aligned}
		-dp(t)&=\Big\{ g_x(t)+p(t)g_y(t)+K_1(t)g_z(t)+b^\top_x(t)p(t)+p(t)b^\top_y(t)p(t)\\
             &\qquad +K_1(t)b^\top_z(t)p(t)+\sigma^\top_x(t)q(t)+p(t)\sigma^\top_y(t)q(t)+K_1(t)\sigma^\top_z(t)q(t)\Big\}\,dt\\
             &\quad -q(t)\,dW(t),\quad t\in[0,T],\\
		 p(T)&=\phi_x(\bar{X}(T)),
	\end{aligned}
	\right.
\end{equation}
which admits a unique solution $(p(\cdot),q(\cdot))\in L^2_{\mathcal{F}}(\Omega;C([0,T];\mathbb{R}^n))\times L^2_{\mathcal{F}}([0,T];\mathbb{R}^n)$,
where
\begin{equation}\label{K_1}
	\mathbb{R}^n\ni K_1(t):=(1-p^\top(t)\sigma_z(t))^{-1}\big[\sigma^\top_x(t)p(t)+p(t)\sigma^\top_y(t)p(t)+q(t)\big],
\end{equation}
and the following second-order decoupling field:	
\begin{equation}\label{second-order-adjoint(PQ)}
	\left\{
	\begin{aligned}
		-dP(t)&=\Big\{G_y(t)P(t)+G_z(t)K_2(t)+\big[b^\top_x(t)+p(t)b^\top_y(t)+K_1(t)b^\top_z(t)\big]P(t)\\
		      &\quad+P(t)\big[b_x(t)+b_y(t)p^\top(t)+b_z(t)K^\top_1(t)\big]\\
		      &\quad+\big[\sigma^\top_x(t)+p(t)\sigma^\top_y(t)+K_1(t)\sigma^\top_z(t)\big]Q(t)\\
              &\quad+Q(t)\big[\sigma_x(t)+\sigma_y(t)p^\top(t)+\sigma_z(t)K^\top_1(t)\big]\\
		      &\quad+\big[\sigma^\top_x(t)+p(t)\sigma^\top_y(t)+K_1(t)\sigma^\top_z(t)\big]P(t)\big[\sigma_x(t)+\sigma_y(t)p^\top(t)+\sigma_z(t)K^\top_1(t)\big]\\
		      &\quad+[I_n,p(t),K_1(t)]D^2G(t)[I_n,p(t),K_1(t)]^\top\Big\}\,dt-Q(t)\,dW(t),\quad t\in[0,T],\\
		  P(T)&=\phi_{xx}(\bar{X}(T)),
	\end{aligned}
	\right.
\end{equation}
which admits a unique solution $(P(\cdot),Q(\cdot))\in L^2_{\mathcal{F}}(\Omega;C([0,T];\mathcal{S}^n))\times L^2_{\mathcal{F}}([0,T];\mathcal{S}^n)$, and
\begin{equation}\label{K_2}
	\begin{aligned}
		\mathbb{R}^{n\times n}\ni K_2(t)&:=(1-p^\top(t)\sigma_z(t))^{-1}\Big\{p^\top(t)\sigma_y(t)P(t)+Q(t)+\big[\sigma^\top_x(t)+p(t)\sigma^\top_y(t)\\
                                        &\qquad +K_1(t)\sigma^\top_z(t)\big]P(t)+P(t)\big[\sigma_x(t)+\sigma_y(t)p^\top(t)+\sigma_z(t)K^\top_1(t)\big]\\
                                        &\qquad +p^\top(t)[I_n,p,K_1]D^2\sigma(t)[I_n,p,K_1]^\top\Big\},
	\end{aligned}
\end{equation}
where
\begin{equation}\label{G()}
	G(t,x,y,z,u,p,q):=g(t,x,y,z,u)+p^\top b(t,x,y,z,u)+q^\top\sigma(t,x,y,z,u).
\end{equation}

Moreover, in this paper, we have the following notions:
$$p^\top(t)[I_n,p,K_1]D^2b(t)[I_n,p,K_1]^\top:=\sum_{i=1}^{n}p^{(i)}(t)[I_n,p,K_1]D^2b^{(i)}(t)[I_n,p,K_1]^\top,$$
and $[I_n,p,K_1]D^2b^{(i)}[I_n,p,K_1]^\top$, $i=1,2,\cdots,n,$ denotes the $n\times n$ matrix:
\begin{equation}
	\begin{aligned}
		[I_n,p,K_1]D^2b^{(i)}[I_n,p,K_1]^\top&:= I_nb^{(i)}_{xx}I_n+I_nb^{(i)}_{xy}p^{\top}+I_nb^{(i)}_{xz}K_1^{\top}+pb^{(i)}_{yx}I_n+pb^{(i)}_{yy}p^{\top}\\
		&\quad\ +pb^{(i)}_{yz}K_1^{\top}+K_1b^{(i)}_{zx}I_n+K_1b^{(i)}_{zy}p^{\top}+K_1b^{(i)}_{zz}K_1^{\top},
	\end{aligned}
\end{equation}
similar as $[I_n,p,K_1]D^2\sigma^{(i)}(t)[I_n,p,K_1]^\top$ and $[I_n,p,K_1]D^2g(t)[I_n,p,K_1]^\top$.

To establish estimations of variations, the following technical assumption is needed (see \cite{HJX18}).
\begin{myassum}\label{q is bounded}
	The solution $q(\cdot)$ of (\ref{first-order-adjoint(pq)}) is a bounded process.
\end{myassum}

Next, we recall $(\lambda(\cdot),\xi(\cdot),\eta(\cdot))\in L^2_{\mathcal{F}}(\Omega;C([0,T];\mathbb{R}^n))\times L^2_{\mathcal{F}}(\Omega;C([0,T];\mathbb{R}^n))\times L^2_{\mathcal{F}}([0,T];\mathbb{R}^n)$ and $(\Gamma(\cdot),\Lambda(\cdot))\in L^2_{\mathcal{F}}(\Omega;C([0,T];\mathcal{S}^n))\times L^2_{\mathcal{F}}([0,T];\mathcal{S}^n)$, which are the solutions of the following first- and second-order adjoint equations, respectively:
\begin{equation}\label{First-Order-Adjoint(mnk)}
	\left\{
	\begin{aligned}
	  d\lambda(t)&=\Big\{\big[\xi^\top(t)b_z(t)+\eta^\top(t)\sigma_z(t)\big](1-p^\top(t)\sigma_z(t))^{-1}p^\top(t)\sigma_y(t)+\xi^\top(t)b_y(t)\\
		         &\qquad +\eta^\top(t)\sigma_y(t)+\lambda(t)\big[G_y(t)+(1-p^\top(t)\sigma_z(t))^{-1}g_z(t)p^\top(t)\sigma_y(t)\big]\Big\}\,dt\\
		         &\quad +\Big\{\big[\xi^\top(t)b_z(t)+\eta^\top(t)\sigma_z(t)\big](1-p^\top(t)\sigma_z(t))^{-1}p^\top(t)\sigma_z(t)+\xi^\top(t)b_z(t)\\
		         &\qquad +\eta^\top(t)\sigma_z(t)+\lambda(t)\big[G_z(t)+(1-p^\top(t)\sigma_z(t))^{-1}g_z(t)p^\top(t)\sigma_z(t)\big]\Big\}\,dW(t),\\
		 -d\xi(t)&=\Big\{\big[b^\top_x(t)+p(t)b^\top_y(t)+K_1(t)b^\top_z(t)\big]\xi(t)+\big[\sigma^\top_x(t)+p(t)\sigma^\top_y(t)\\
		         &\qquad +K_1(t)\sigma^\top_z(t)\big]\eta(t)\Big\}\,dt-\eta(t)\,dW(t),\quad t\in[0,T],\\
	   \lambda(0)&=\varphi_y(\bar{X}(T),\bar{Y}(0)),\quad \xi(T)=\varphi_x(\bar{X}(T),\bar{Y}(0)),
	\end{aligned}
	\right.
\end{equation}
\begin{equation}\label{Second-Order_Adjoint(MN)}
	\left\{
	\begin{aligned}
 -d\Gamma(t)&=\Big\{\Gamma(t)\big[b_x(t)+b_y(t)p^\top(t)+b_z(t)K^\top_1(t)\big]+\big[b^\top_x(t)+p(t)b^\top_y(t)\\
		    &\qquad +K_1(t)b^\top_z(t)\big]\Gamma(t)+\big[\sigma^\top_x(t)+p(t)\sigma^\top_y(t)+K_1(t)\sigma^\top_z(t)\big]\Gamma(t)\big[\sigma_x(t)\\
		    &\qquad +\sigma_y(t)p^\top(t)+\sigma_z(t)K^\top_1(t)\big]+\big[\sigma^\top_x(t)+p(t)\sigma^\top_y(t)+K_1(t)\sigma^\top_z(t)\big]\Lambda(t)\\
		    &\qquad +\Lambda(t)\big[\sigma_x(t)+\sigma_y(t)p^\top(t)+\sigma_z(t)K_1^\top(t)\big]+\xi^\top(t)\big[b_y(t)P(t)\\
		    &\qquad +b_z(t)K_2(t)+[I_n,p(t),K_1(t)]D^2b(t)[I_n,p(t),K_1(t)]^\top\big]+\eta^\top(t)\big[\sigma_y(t)P(t)\\
		    &\qquad +\sigma_z(t)K_2(t)+[I_n,p(t),K_1(t)]D^2\sigma(t)[I_n,p(t),K_1(t)]^\top\big]\Big\}\,dt\\
		    &\quad -\Lambda(t)\,dW(t),\quad t\in[0,T],\\
   \Gamma(T)&=\varphi_{xx}(\bar{X}(T),\bar{Y}(0)).
	\end{aligned}
	\right.
\end{equation}

\begin{Remark}\label{RemarkOnSolvabilityOf(p,q)and(P,Q)}
\cite{HJX18} first established (\ref{first-order-adjoint(pq)}) and (\ref{second-order-adjoint(PQ)}) in one-dimensional form to decouple the first- and second-order variational equations. The above high-dimension form is given in \cite{LS-CCDC2023submission}.
 Under some mild conditions, the unique solvability of (\ref{algebraEquation}) and (\ref{first-order-adjoint(pq)}) had been proved in Theorem 3.6 and Lemma 3.9 of \cite{HJX18}.
(\ref{First-Order-Adjoint(mnk)}) is a non-coupled FBSDE and Assumption \ref{AssumSolvableOfStateEquation} guarantees its unique solvability.
 (\ref{second-order-adjoint(PQ)}) is a linear BSDE with uniformly Lipschitz coefficients, which has a unique solution.	
\end{Remark}

We define the risk-neutral Hamiltonian function $\mathcal{H}^{RN}:[0,T]\times\mathbb{R}^n\times\mathbb{R}\times\mathbb{R}\times U\times\mathbb{R}^n\times\mathbb{R}^n\times\mathcal{S}^n\times\mathbb{R}^n\times\mathbb{R}^n\times\mathbb{R}^n\times\mathcal{S}^n\to\mathbb{R}$:
\begin{equation}\label{Hamiltonian-RN}
	\begin{aligned}
		\mathcal{H}&^{RN}(t,x,y,z,u,p,q,P,\lambda,\xi,\eta,\Gamma)\\
		&:=\big(\xi^\top+\lambda p^\top\big)b(t,x,y,z+\Delta(t),u)+\big(\eta^\top+\lambda q^\top\big)\sigma(t,x,y,z+\Delta(t),u)\\
		&\quad +\lambda g(t,x,y,z+\Delta(t),u)+\frac{1}{2}\big[\sigma(t,x,y,z+\Delta(t),u)-\sigma(t,\bar{X}(t),\bar{Y}(t),\bar{Z}(t),\bar{u}(t))\big]^\top\\
		&\quad \times(\Gamma+\lambda P)\big[\sigma(t,x,y,z+\Delta(t),u)-\sigma(t,\bar{X}(t),\bar{Y}(t),\bar{Z}(t),\bar{u}(t))\big],
	\end{aligned}
\end{equation}
where $\Delta(\cdot)$ is the solution of (\ref{algebraEquation}) corresponding to $u(t)=u$.

The following lemma is from \cite{LS-CCDC2023submission}.
\begin{mylem}\label{SMP-MayerType}
	Suppose Assumption \ref{SeparableSubset}, Assumption \ref{AssumSolvableOfStateEquation} and Assumption \ref{q is bounded} hold. Let (\ref{first-order-adjoint(pq)}) and (\ref{algebraEquation}) have unique solutions $(p(\cdot),q(\cdot))\in L^2_{\mathcal{F}}(\Omega;C([0,T],\mathbb{R}))\times L^2_{\mathcal{F}}([0,T];\mathbb{R})$ and $\Delta(\cdot)\in L^2_{\mathcal{F}}([0,T];\mathbb{R}^n)$ respectively. Let $\bar{u}(\cdot)\in\mathcal{U}[0,T]$ be optimal and $(\bar{X}(\cdot),\bar{Y}(\cdot),\bar{Z}(\cdot))$ be the corresponding state processes of (\ref{stateEquation}). Then
	\begin{equation}
		\begin{aligned}
			&\mathcal{H}^{RN}\big(t,\bar{X}(t),\bar{Y}(t),\bar{Z}(t),u,p(t),q(t),P(t),\lambda(t),\xi(t),\eta(t),\Gamma(t)\big)\\
			&\quad \geq\mathcal{H}^{RN}\big(t,\bar{X}(t),\bar{Y}(t),\bar{Z}(t),\bar{u}(t),p(t),q(t),P(t),\lambda(t),\xi(t),\eta(t),\Gamma(t)\big),\\
			&\hspace{5cm} \forall u\in U,\quad a.e.,\quad \mathbb{P}\mbox{-}a.s.,
		\end{aligned}
	\end{equation}
where $\Delta(\cdot)$ satisfies (\ref{algebraEquation}), and $(P(\cdot),Q(\cdot))$, $(\lambda(\cdot),\xi(\cdot),\eta(\cdot))$, $(\Gamma(\cdot),\Lambda(\cdot))$ are solutions of (\ref{second-order-adjoint(PQ)}), (\ref{First-Order-Adjoint(mnk)}) and (\ref{Second-Order_Adjoint(MN)}) respectively.
\end{mylem}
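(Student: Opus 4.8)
The plan is to run the first- and second-order spike variation scheme of \cite{HJX18} and its extension \cite{LS-CCDC2023submission}, specialized to the Mayer cost (\ref{CostFunctionalMayerType}). Since $\bar{u}(\cdot)$ is optimal, the spike variation $u^{\epsilon}(\cdot)$ of (\ref{SpikeVariation}) satisfies $J^{RN}(u^{\epsilon}(\cdot)) - J^{RN}(\bar{u}(\cdot)) \ge 0$, so the whole argument reduces to expanding this difference to first order in $\epsilon = |E_{\epsilon}|$ and identifying the coefficient with the jump of the Hamiltonian (\ref{Hamiltonian-RN}) integrated over $E_{\epsilon}$.

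First I would introduce the first- and second-order variational processes $(X^{1},Y^{1},Z^{1})$ and $(X^{2},Y^{2},Z^{2})$ by formally differentiating (\ref{stateEquation}) along $u^{\epsilon}(\cdot)$, so that $X^{\epsilon}\approx\bar{X}+X^{1}+X^{2}$ and similarly for $Y,Z$. The essential obstruction of the fully coupled setting is that $Z$ enters all four coefficients and the relation $Y(T)=\phi(X(T))$ ties the forward and backward parts together, so these variational equations are themselves coupled FBSDEs whose $Z$-components jump under the diffusion spike. The device that dissolves this coupling is the pair of decoupling fields: the first-order field $(p,q)$ of (\ref{first-order-adjoint(pq)}) with $K_{1}$ in (\ref{K_1}), and the second-order field $(P,Q)$ of (\ref{second-order-adjoint(PQ)}) with $K_{2}$ in (\ref{K_2}). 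These express $Y^{i},Z^{i}$ as affine functionals of $X^{i}$, while the spike-induced shift of $Z$ is absorbed into the process $\Delta(\cdot)$ solving the algebra equation (\ref{algebraEquation}); this is exactly why (\ref{Hamiltonian-RN}) evaluates $b,\sigma,g$ at $z+\Delta(t)$.

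Next I would establish the $L^{\beta}$-estimates of the state and variational processes. Assumption \ref{AssumSolvableOfStateEquation}(iii), forcing the contraction constants $\Lambda_{\beta}<1$, together with the boundedness of $q(\cdot)$ from Assumption \ref{q is bounded}, gives bounds uniform in $\epsilon$; the standard outcome is $\|X^{1}\|=O(\sqrt{\epsilon})$ (the diffusion spike), $\|X^{2}\|=O(\epsilon)$, and a remainder of order $o(\epsilon)$, with matching bounds for $Y,Z$. These justify a second-order Taylor expansion of $\varphi(X^{\epsilon}(T),Y^{\epsilon}(0))$ about $(\bar{X}(T),\bar{Y}(0))$. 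A point worth noting is that $Y^{1}(0)$ is $\mathcal{F}_{0}$-measurable, hence deterministic, and the correlation estimates show it is in fact of order $\epsilon$ rather than $\sqrt{\epsilon}$; consequently the $\varphi_{yy}$ and $\varphi_{xy}$ endpoint terms are $o(\epsilon)$ and drop out, which is precisely why $\Gamma$ in (\ref{Second-Order_Adjoint(MN)}) carries only $\varphi_{xx}$ as its terminal value. The expansion thus reads
\[
J^{RN}(u^{\epsilon})-J^{RN}(\bar{u})=\mathbb{E}\Big[\varphi_{x}^{\top}(X^{1}+X^{2})(T)+\varphi_{y}(Y^{1}+Y^{2})(0)+\tfrac{1}{2}X^{1}(T)^{\top}\varphi_{xx}X^{1}(T)\Big]+o(\epsilon).
\]

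The core of the proof is the duality computation. I would apply It\^{o}'s formula to the pairings $\langle\xi,X^{1}+X^{2}\rangle$, $\lambda(Y^{1}+Y^{2})$ and $\langle\Gamma X^{1},X^{1}\rangle$, using the first-order adjoint (\ref{First-Order-Adjoint(mnk)}) and the second-order adjoint (\ref{Second-Order_Adjoint(MN)}). Because (\ref{First-Order-Adjoint(mnk)}) is a non-coupled FBSDE, with $\lambda$ forward ($\lambda(0)=\varphi_{y}$) and $(\xi,\eta)$ backward ($\xi(T)=\varphi_{x}$), the boundary terms reproduce exactly the $\varphi_{x},\varphi_{y},\varphi_{xx}$ contributions above, while the adjoint drifts are designed to annihilate the variational drifts off $E_{\epsilon}$. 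What survives is an integral over $E_{\epsilon}$ assembling into the jump of (\ref{Hamiltonian-RN}), i.e.
\[
J^{RN}(u^{\epsilon})-J^{RN}(\bar{u})=\mathbb{E}\int_{E_{\epsilon}}\big[\mathcal{H}^{RN}(t,\bar{X},\bar{Y},\bar{Z},u,\ldots)-\mathcal{H}^{RN}(t,\bar{X},\bar{Y},\bar{Z},\bar{u},\ldots)\big]\,dt+o(\epsilon).
\]
I expect the main obstacle to be precisely this bookkeeping: one must verify that the $\Delta$-shift, the $K_{1},K_{2}$ terms, and the second-order Hessian contributions through $D^{2}G$ in (\ref{second-order-adjoint(PQ)}) and (\ref{Second-Order_Adjoint(MN)}) combine so that the off-$E_{\epsilon}$ terms cancel and the on-$E_{\epsilon}$ terms reconstitute exactly the quadratic form in (\ref{Hamiltonian-RN}). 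Once the identity holds, dividing by $\epsilon$, letting $E_{\epsilon}$ shrink to a Lebesgue point, and invoking the separability of $U$ from Assumption \ref{SeparableSubset} to make the selection of $u$ measurable, yields the pointwise inequality claimed in the lemma for all $u\in U$, a.e.\ $t$, $\mathbb{P}$-a.s.
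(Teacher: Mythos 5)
Your outline is correct and is essentially the argument behind this lemma: the paper itself gives no proof---Lemma \ref{SMP-MayerType} is simply quoted from the companion paper \cite{LS-CCDC2023submission}, whose proof (building on the spike-variation machinery of \cite{HJX18}) is exactly the scheme you describe: first- and second-order variations decoupled by $(p,q)$, $(P,Q)$ and the algebraic process $\Delta(\cdot)$ absorbing the $O(1)$ jump of $Z$ on $E_{\epsilon}$, duality with the non-coupled adjoint $(\lambda,\xi,\eta)$ and $(\Gamma,\Lambda)$, and the observation that $Y^{\epsilon}(0)-\bar{Y}(0)=O(\epsilon)$, which is why $\Gamma(T)$ carries only $\varphi_{xx}$. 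The only thing separating your proposal from a complete proof is the cancellation bookkeeping you explicitly defer, which is precisely the content of the cited proof rather than anything carried out in this paper.
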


\subsection{Risk-sensitive global maximum principle}

In this section, we give the risk-sensitive global maximum principle, which is the main result of this paper. The proof will be given in the next section.

Consider the state equation (\ref{stateEquation}) combined with the risk-sensitive cost functional (\ref{R-S CostFunctional}). The first-order decoupling field is given as:
\begin{equation}\label{first-order-RS-adjoint(pq)}
	\left\{
	\begin{aligned}
		-dp(t)&=\Big\{ g_x(t)+p(t)g_y(t)+K_1(t)g_z(t)+b^\top_x(t)p(t)+p(t)b^\top_y(t)p(t)\\
		     &\qquad +K_1(t)b^\top_z(t)p(t)+\sigma^\top_x(t)q(t)+p(t)\sigma^\top_y(t)q(t)+K_1(t)\sigma^\top_z(t)q(t)\Big\}\,dt\\
		     &\quad -q(t)\,dW(t),\quad t\in[0,T],\\
		 p(T)&=\phi_x(\bar{X}(T)),
	\end{aligned}
	\right.
\end{equation}
which admits a unique solution $(p(\cdot),q(\cdot))\in L^2_{\mathcal{F}}(\Omega;C([0,T];\mathbb{R}^n))\times L^2_{\mathcal{F}}([0,T];\mathbb{R}^n)$, and $K_1(\cdot)$ is the process defined in (\ref{K_1}). The second-order risk-sensitive decoupling field is:
\begin{equation}\label{second-order-RS-adjoint(PQ)}
	\left\{
	\begin{aligned}
  -dP(t)&=\Big\{G_y(t)P(t)+G_z(t)K_2(t)+\big[b^\top_x(t)+p(t)b^\top_y(t)+K_1(t)b^\top_z(t)\big]P(t)\\
		&\quad\ +P(t)\big[b_x(t)+b_y(t)p^\top(t)+b_z(t)K^\top_1(t)\big]+\big[\sigma^\top_x(t)+p(t)\sigma^\top_y(t)\\
        &\quad\ +K_1(t)\sigma^\top_z(t)\big]Q(t)+Q(t)\big[\sigma_x(t)+\sigma_y(t)p^\top(t)+\sigma_z(t)K^\top_1(t)\big]+\big[\sigma^\top_x(t)\\
		&\quad\ +p(t)\sigma^\top_y(t)+K_1(t)\sigma^\top_z(t)\big]P(t)\big[\sigma_x(t)+\sigma_y(t)p^\top(t)+\sigma_z(t)K^\top_1(t)\big]\\
		&\quad\ +[I_n,p(t),K_1(t)]D^2G(t)[I_n,p(t),K_1(t)]^\top\Big\}\,dt-Q(t)\,dW(t),\quad t\in[0,T],\\
	P(T)&=\phi_{xx}(\bar{X}(T)),
	\end{aligned}
	\right.
\end{equation}	
which admits unique solution $(P(\cdot),Q(\cdot))\in L^2_{\mathcal{F}}([0,T];\mathcal{S}^n)\times L^2_{\mathcal{F}}([0,T];\mathcal{S}^n)$. Further, $K_2(\cdot)$ and $G(\cdot,x,y,z,u,p,q)$ are given in (\ref{K_2}) and (\ref{G()}) respectively.

Similar as the risk-neutral case, we introduce the first-order risk-sensitive adjoint processes $(k(\cdot),m(\cdot),n(\cdot))\in L^2_{\mathcal{F}}(\Omega;C([0,T];\mathbb{R}^n))\times L^2_{\mathcal{F}}(\Omega;C([0,T];\mathbb{R}^n))\times L^2_{\mathcal{F}}([0,T];\mathbb{R}^n)$, which is the unique solution of the following non-coupled FBSDE:
\begin{equation*}\left\{
	\begin{aligned}
   dk(t)&=\Big\{\big[m^\top(t)b_z(t)+l_z(t)+(n^\top(t)+\theta\kappa(t)m^\top(t))\sigma_z(t)\big](1-p^\top(t)\sigma_z(t))^{-1}\\
        &\qquad \times(p^\top(t)\sigma_y(t)-\theta\kappa(t))+m^\top(t)b_y(t)+l_y(t)+(n^\top(t)+\theta\kappa(t)m^\top(t))\sigma_y(t)\\
		&\qquad +k(t)\big[g_y(t)+p^\top(t)b_y(t)+l_y(t)+q^\top(t)\sigma_y(t)+(1-p^\top(t)\sigma_z(t))^{-1}\\
        &\qquad \times g_z(t)p^\top(t)\sigma_y(t)\big]+\theta^2\kappa^2(t)k(t)-\theta\kappa(t)k(t)\big[g_z(t)+p^\top(t)b_z(t)+l_z(t)\\
		&\qquad +q^\top(t)\sigma_z(t)+(1-p^\top(t)\sigma_z(t))^{-1}g_z(t)p^\top(t)\sigma_z(t)\big]\Big\}\,dt\\
		&\quad +\Big\{\big[m^\top(t)b_z(t)+l_z(t)+(n^\top(t)+\theta\kappa(t)m^\top(t))\sigma_z(t)\big](1-p^\top(t)\sigma_z(t))^{-1}\\
		&\qquad\ +k(t)\big[g_z(t)+p^\top(t)b_z(t)+l_z(t)+q^\top(t)\sigma_z(t)+(1-p^\top(t)\sigma_z(t))^{-1}\\
        &\qquad\ \times g_z(t)p^\top(t)\sigma_z(t)\big]-\theta\kappa(t)k(t)\Big\}\,dW(t),\\
	\end{aligned}\right.
\end{equation*}
\begin{equation}\label{First-order Risk-sensitive Adjoint Equation(mnk)}\left\{
	\begin{aligned}
  -dm(t)&=\Big\{\big[b^\top_x(t)+p(t)b^\top_y(t)+K_1(t)b^\top_z(t)\big]m(t)+l^\top_x(t)+p(t)l^\top_y(t)\\
		&\quad\ +K_1(t)l^\top_z(t)+\big[\sigma^\top_x(t)+p(t)\sigma^\top_y(t)+K_1(t)\sigma^\top_z(t)\big](\theta\kappa(t)m(t)+n(t))\\
		&\quad\ +\theta\kappa(t)n(t)\Big\}\,dt-n(t)\,dW(t),\quad t\in[0,T],\\
	k(0)&=f_y(\bar{X}(T),\bar{Y}(0)),\quad m(T)=f_x(\bar{X}(T),\bar{Y}(0)),
	\end{aligned}\right.
\end{equation}
where $(\gamma(\cdot),\kappa(\cdot))\in\mathbb{R}\times\mathbb{R}$ is the unique solution of (\ref{Equation-gamma-kappa}), a BSDE with quadratic generator. The unique solvability of (\ref{First-order Risk-sensitive Adjoint Equation(mnk)}) is discussed in Remark \ref{Remark-unique-solv-first-order-RS(mnk)-trans}.

Next, we introduce the second-order risk-sensitive adjoint processes $(M(\cdot),N(\cdot))\in L^2_{\mathcal{F}}([0,T];\\\mathcal{S}^n)\times L^2_{\mathcal{F}}([0,T];\mathcal{S}^n)$, which satisfies the following matrix-valued linear BSDE:
\begin{equation}\label{Second-order Risk-sensitive Adjoint Equation(MN)}
	\left\{
	\begin{aligned}
  -dM(t)&=\Big\{M(t)\big[b_x(t)+b_y(t)p^\top(t)+b_z(t)K^\top_1(t)\big]\\
        &\qquad +\big[b^\top_x(t)+p(t)b^\top_y(t)+K_1(t)b^\top_z(t)\big]M(t)\\
		&\qquad +\big[\sigma^\top_x(t)+p(t)\sigma^\top_y(t)+K_1(t)\sigma^\top_z(t)\big]\big[M(t)+\theta m(t)m^\top(t)\big]\\
		&\qquad \times\big[\sigma_x(t)+\sigma_y(t)p^\top(t)+\sigma_z(t)K^\top_1(t)\big]\\
		&\qquad +\big[\sigma^\top_x(t)+p(t)\sigma^\top_y(t)+K_1(t)\sigma^\top_z(t)\big]\big[N(t)+\theta\kappa(t)M(t)+\theta m(t)n^\top(t)\big]\\
		&\qquad +\big[N(t)+\theta\kappa(t)M(t)+\theta n(t)m^\top(t)\big]\big[\sigma_x(t)+\sigma_y(t)p^\top(t)+\sigma_z(t)K_1^\top(t)\big]\\
		&\qquad +\big[m^\top(t)b_y(t)+l_y(t)\big]P(t)+\big[m^\top(t)b_z(t)+l_z(t)\big]K_2(t)+\theta\kappa(t)N(t)\\
		&\qquad -\theta n(t)n^\top(t)+m^\top(t)[I_n,p(t),K_1(t)]D^2b(t)[I_n,p(t),K_1(t)]^\top\\
        &\qquad +[I_n,p(t),K_1(t)]D^2l(t)[I_n,p(t),K_1(t)]^\top\\
		&\qquad +\big[n^\top(t)+\theta\kappa(t)m^\top(t)\big][I_n,p(t),K_1(t)]D^2\sigma(t)[I_n,p(t),K_1(t)]^\top\Big\}\,dt\\
        &\quad -N(t)\,dW(t),\quad t\in[0,T],\\
	M(T)&=f_{xx}(\bar{X}(T),\bar{Y}(0)).
	\end{aligned}
	\right.
\end{equation}

Define the risk-sensitive Hamiltonian $\mathcal{H}^{RS}:[0,T]\times\mathbb{R}^n\times\mathbb{R}\times\mathbb{R}\times U\times\mathbb{R}^n\times\mathbb{R}^n\times\mathbb{R}^n\times\mathbb{R}^n\times\mathbb{R}^n\times\mathcal{S}^n\times\mathcal{S}^n\times\mathbb{R}\to\mathbb{R}$:
\begin{equation}\label{Hamiltonian-RS}
	\begin{aligned}
		&\mathcal{H}_\theta^{RS}(t,x,y,z,u,p,q,k,m,n,P,M,\kappa)\\
		&:=\big(m^\top+kp^\top\big)b(t)+l(t)+\big(n^\top+\theta\kappa m^\top+kq^\top\big)\sigma(t)+kg(t)\\
		&\qquad +\frac{1}{2}\big(\sigma(t)-\sigma(t,\bar{X},\bar{Y},\bar{Z},\bar{u})\big)^\top\big(M+\theta mm^\top+kP\big)\big(\sigma(t)-\sigma(t,\bar{X},\bar{Y},\bar{Z},\bar{u})\big),\\
	\end{aligned}
\end{equation}
where $b(t)\equiv b(t,x,y,z+\Delta(t),u)$ for simplification, and $\sigma(t)$, $g(t)$ and $l(t)$ are the same. $\Delta(\cdot)$ is the solution of (\ref{algebraEquation}) corresponding to $u(t)=u$.

Now we give the risk-sensitive global maximum principle, whose proof will be given in the next section.
\begin{mythm}\label{SMP-RS}
	Suppose Assumption \ref{SeparableSubset}, Assumption \ref{AssumSolvableOfStateEquation} and Assumption \ref{q is bounded} hold. Let (\ref{first-order-RS-adjoint(pq)}) and (\ref{algebraEquation}) have unique solutions $(p(\cdot),q(\cdot))\in L^2_{\mathcal{F}}(\Omega;C([0,T],\mathbb{R}))\times L^2_{\mathcal{F}}([0,T];\mathbb{R})$ and $\Delta(\cdot)\in L^2_{\mathcal{F}}([0,T];\mathbb{R}^n)$ respectively. Let $\bar{u}(\cdot)\in\mathcal{U}[0,T]$ be optimal and $(\bar{X}(\cdot),\bar{Y}(\cdot),\bar{Z}(\cdot))$ be the corresponding state processes of (\ref{stateEquation}). Then
	\begin{equation}\label{maximum condition-RS}
		\begin{aligned}
			&\theta\,\mathcal{H}_\theta^{RS}\big(t,\bar{X}(t),\bar{Y}(t),\bar{Z}(t),u,p(t),q(t),k(t),m(t),n(t),P(t),M(t),\kappa(t)\big)\\
			&\geq\theta\,\mathcal{H}_\theta^{RS}\big(t,\bar{X}(t),\bar{Y}(t),\bar{Z}(t),\bar{u}(t),p(t),q(t),k(t),m(t),n(t),P(t),M(t),\kappa(t)\big),\\
			&\hspace{6cm}\forall u\in U,\quad a.e.,\quad \mathbb{P}\mbox{-}a.s.,
		\end{aligned}
	\end{equation}
where $(P(\cdot),Q(\cdot))$, $(k(\cdot),m(\cdot),n(\cdot))$, $(M(\cdot),N(\cdot))$ and $\kappa(\cdot)$ are solutions of (\ref{first-order-RS-adjoint(pq)}), (\ref{second-order-RS-adjoint(PQ)}), (\ref{First-order Risk-sensitive Adjoint Equation(mnk)}), (\ref{Second-order Risk-sensitive Adjoint Equation(MN)}) and (\ref{Equation-gamma-kappa}) respectively.
\end{mythm}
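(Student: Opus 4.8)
The plan is to convert the risk-sensitive problem (\ref{RS problem}) into an equivalent risk-neutral problem of the Mayer type covered by Lemma \ref{SMP-MayerType}, by means of a logarithmic (El Karoui--Hamad\`ene / Djehiche et al.\ \cite{EH03,DTT15}) transformation, and then to read off the risk-sensitive adjoint equations and maximum condition from those of the resulting augmented system. First I would introduce the auxiliary scalar backward process $(\gamma(\cdot),\kappa(\cdot))$ defined by
\[
\gamma(t)=\frac{1}{\theta}\ln\mathbb{E}\Big[\exp\Big(\theta\big(\int_t^T l(s,\bar X,\bar Y,\bar Z,\bar u)\,ds+f(\bar X(T),\bar Y(0))\big)\Big)\,\Big|\,\mathcal{F}_t\Big].
\]
Applying It\^o's formula to $e^{\theta\gamma}$ together with the martingale representation theorem shows that $(\gamma,\kappa)$ solves the quadratic-generator BSDE (\ref{Equation-gamma-kappa}), with generator $l(t,\cdot)+\tfrac{\theta}{2}\kappa^2$ and terminal value $f(\bar X(T),\bar Y(0))$. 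Since $\mathcal{F}_0$ is trivial, $\gamma(0)=\tfrac1\theta\ln J^{RS}(\bar u(\cdot))$ is deterministic, so $J^{RS}(u(\cdot))=e^{\theta\gamma^{u}(0)}$, and minimizing $J^{RS}$ is equivalent to minimizing (for $\theta>0$) or maximizing (for $\theta<0$) the initial value $\gamma^{u}(0)$. In either case $\bar u(\cdot)$ is optimal for the augmented forward-backward system consisting of (\ref{stateEquation}) together with the new backward component (\ref{Equation-gamma-kappa}), under a Mayer-type cost that selects $\gamma(0)$.

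Next I would apply the risk-neutral maximum principle of Lemma \ref{SMP-MayerType} (in the vector-backward form of \cite{LS-CCDC2023submission}) to this augmented system, whose forward state is $X$ and whose backward state is the pair $(Y,\gamma)$ with martingale integrands $(Z,\kappa)$. A key simplification is that $b,\sigma,g,\phi$ do not depend on $(\gamma,\kappa)$; hence the forward-backward decoupling between $X$ and $Y$ is untouched, and the first- and second-order decoupling fields $(p,q)$ and $(P,Q)$ of the augmented system coincide verbatim with (\ref{first-order-RS-adjoint(pq)}) and (\ref{second-order-RS-adjoint(PQ)}). The only genuinely new contributions come from differentiating the $\gamma$-generator $l+\tfrac\theta2\kappa^2$: its first $\kappa$-derivative produces the factor $\theta\kappa$ and its second $\kappa$-derivative produces the constant $\theta$. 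I would then identify the risk-neutral first-order adjoint triple of the augmented problem with $(k,m,n)$ and its second-order adjoint pair with $(M,N)$. The $\theta\kappa$-terms throughout (\ref{First-order Risk-sensitive Adjoint Equation(mnk)}), and the $\theta\kappa M$, $\theta\kappa N$, $\theta m m^\top$ and $\theta n n^\top$ terms in (\ref{Second-order Risk-sensitive Adjoint Equation(MN)}), are precisely these derivative contributions, with boundary data $k(0)=f_y$, $m(T)=f_x$, $M(T)=f_{xx}$ inherited from $\varphi_y,\varphi_x,\varphi_{xx}$ of (\ref{First-Order-Adjoint(mnk)})--(\ref{Second-Order_Adjoint(MN)}) evaluated at $(\bar X(T),\bar Y(0))$. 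Substituting these identifications into the augmented risk-neutral Hamiltonian $\mathcal{H}^{RN}$ of (\ref{Hamiltonian-RN}) reproduces $\mathcal{H}_\theta^{RS}$ of (\ref{Hamiltonian-RS}), and the monotone log-transform converts the risk-neutral maximum inequality into (\ref{maximum condition-RS}), the prefactor $\theta$ absorbing the sign change between the $\theta>0$ and $\theta<0$ cases.

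The main obstacle I anticipate is analytic rather than algebraic. The auxiliary BSDE (\ref{Equation-gamma-kappa}) has a generator quadratic in $\kappa$ and, because $f$ and $l$ only enjoy quadratic growth (Assumption \ref{AssumSolvableOfStateEquation}(ii)), an unbounded terminal value; its well-posedness and the exponential integrability needed for $J^{RS}$ to be finite must be established (via the conditional-expectation representation above together with the $L^8$ moment bounds on states and controls), and the first- and second-order spike-variation estimates of \cite{HJX18} must be re-derived in the presence of this non-Lipschitz generator, which requires controlling $\kappa$ along the optimal trajectory. A further delicate point is the unusual terminal coupling $\gamma(T)=f(\bar X(T),\bar Y(0))$, which links the terminal value of the new backward component to the initial value $\bar Y(0)$ of the original one; this is exactly what forces the split boundary conditions $k(0)=f_y$ and $m(T)=f_x$, and it must be tracked carefully through the variational expansion so that no cross term between the two backward components is lost.
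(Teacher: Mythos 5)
Your proposal follows the right circle of ideas (the logarithmic transformation of El Karoui--Hamad\`ene and Djehiche et al.), but it inverts the reduction in a way that creates a genuine gap: you augment the \emph{backward} state, adjoining $(\gamma(\cdot),\kappa(\cdot))$ as a second backward component for \emph{every} admissible control, and then invoke Lemma \ref{SMP-MayerType} ``in the vector-backward form.'' No such form is available: Lemma \ref{SMP-MayerType} (and the companion result in \cite{LS-CCDC2023submission}) is stated for the system (\ref{stateEquation}) with a \emph{scalar} backward pair $(Y,Z)$; the ``high-dimensional form'' mentioned in Remark \ref{RemarkOnSolvabilityOf(p,q)and(P,Q)} concerns only the forward variable. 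Worse, your augmented system falls outside the hypotheses that make the spike-variation machinery of \cite{HJX18} work at all: the $\gamma$-generator $l+\frac{\theta}{2}\kappa^2$ has unbounded first and second derivatives in $\kappa$, violating Assumption \ref{AssumSolvableOfStateEquation}, and the terminal condition $\gamma(T)=f(X(T),Y(0))$ ties the terminal value of one backward component to the \emph{initial} value of the other --- a coupling for which neither well-posedness nor a variational theory exists in the paper or its references. You do flag both points as ``anticipated obstacles,'' but they are not finishing touches; they are precisely where the argument would have to produce a new maximum principle (for fully coupled FBSDEs with multidimensional backward state, quadratic generator, and mixed initial-terminal data), which is as hard as the theorem itself. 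There is also an integrability slip: $L^8$ moment bounds on states and controls do not yield the exponential moments needed to define $\gamma^u(\cdot)$ for arbitrary $u(\cdot)$.

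The paper's proof avoids all of this by augmenting the \emph{forward} state instead: $\tilde{X}(t)=\int_0^t l\,ds$ absorbs the running cost, the augmented system (\ref{StateEquation-RS}) still has a scalar backward component with Lipschitz coefficients, and the cost (\ref{CostFunctional-RS}) is of Mayer type, so Lemma \ref{SMP-MayerType} applies verbatim, producing the $(n+1)$-dimensional adjoint systems (\ref{First-Order-adjoint-RS-(p,q)-HighD})--(\ref{Second-Order_Adjoint(MN)-HighD}). The quadratic BSDE (\ref{Equation-gamma-kappa}) enters only \emph{after} the maximum principle has been applied, and only along the fixed optimal trajectory, as a representation of the positive martingale $\zeta(t)=\mathbb{E}\big[e^{\theta[\bar{\tilde X}(T)+f(\bar X(T),\bar Y(0))]}\,\big|\,\mathcal{F}_t\big]$; it is a computational device for the non-linear transformations (\ref{transformation first-order}) and (\ref{transformation second-order}) that strip the factor $\theta\zeta(t)$ and the redundant components out of the adjoint processes, yielding (\ref{First-order Risk-sensitive Adjoint Equation(mnk)}), (\ref{Second-order Risk-sensitive Adjoint Equation(MN)}), and the identity $\tilde{\mathcal{H}}^{RN}=\theta\zeta(t)\mathcal{H}^{RS}_\theta$ from which (\ref{maximum condition-RS}) follows since $\zeta(t)>0$. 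So no variational analysis of a quadratic-generator BSDE is ever required. Your algebraic predictions (the $\theta\kappa$, $\theta mm^\top$, $\theta nn^\top$ terms, the boundary data $k(0)=f_y$, $m(T)=f_x$, $M(T)=f_{xx}$) are consistent with the final equations, but to make them a proof you would need to rebuild the reduction along the paper's lines rather than through the backward augmentation.
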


\section{Proof of the main result}

The exponential-of-integral in the cost functional (\ref{R-S CostFunctional}) makes the risk-sensitive problem different from the risk-neutral one. Hence, the risk-neutral global maximum principle can not be used directly, to give the necessary conditions of the optimal control. Inspired by Lim and Zhou \cite{LZ05}, we reformulate the risk-sensitive stochastic optimal control problem (\ref{RS problem}) to the risk-neutral form with the help of a one-dimensional auxiliary process $\tilde{X}(\cdot)$, which is the solution of the following equation:
\begin{equation*}
	\left\{
	\begin{aligned}
		d\tilde{X}(t)&=l(t,X(t),Y(t),Z(t),u(t))\,dt,\quad t\in[0,T],\\
		\tilde{X}(0)&=0,
	\end{aligned}
	\right.
\end{equation*}
where $l$ is defined in (\ref{R-S CostFunctional}). Set
\begin{equation*}
	\begin{aligned}
		\mathbbm{X}(t):=\begin{bmatrix}
			X(t)\\
			\tilde{X}(t)
		\end{bmatrix},\quad\mathbbm{B}(t):=\begin{bmatrix}
			b(t)\\
			l(t)
		\end{bmatrix},\quad\Sigma(t):=\begin{bmatrix}
			\sigma(t)\\
			0
		\end{bmatrix},
	\end{aligned}
\end{equation*}
in which we omit $\mathbbm{X}(t)$, $Y(t)$, $Z(t)$ and $u(t)$ in the above maps for simplify, for $t\in[0,T]$.

Thus we can reformulate the problem (\ref{RS problem}) as:
\begin{equation}\label{StateEquation-RS}
	\left\{
	\begin{aligned}
		d\mathbbm{X}(t)&=\mathbbm{B}(t,\mathbbm{X}(t),Y(t),Z(t),u(t))\,dt+\Sigma(t,\mathbbm{X}(t),Y(t),Z(t),u(t))\,dW(t),\\
		-dY(t)&=g(t,X(t),Y(t),Z(t),u(t))\,dt-Z(t)\,dW(t),\quad t\in[0,T],\\
		\mathbbm{X}(0)&=\begin{bmatrix}
			x^\top_0&0
		\end{bmatrix}^\top\in\mathbb{R}^{n+1},\quad Y(T)=\phi(X(T)),
	\end{aligned}
	\right.
\end{equation}
with the cost functional:
\begin{equation}\label{CostFunctional-RS}
	J^{RS}(u(\cdot))=\mathbb{E}\big[e^{\theta[\,\tilde{X}(T)+f(X(T),Y(0))\,]}\big].
\end{equation}

\subsection{Applying risk-neutral global maximum principle}

Since (\ref{CostFunctional-RS}) only has the terminal term, the results in Section \ref{section-SMP-RN} can be applied now. For the new state equation (\ref{StateEquation-RS}) and cost functional (\ref{CostFunctional-RS}), we recall (\ref{first-order-adjoint(pq)}) as:
\begin{equation}\label{First-Order-adjoint-RS-(p,q)-HighD}
	\left\{
	\begin{aligned}
		-d\mathbbm{p}(t)&=\Big\{g_{\mathbbm{x}}(t)+\mathbbm{p}(t)g_y(t)+\mathbbm{K}_1(t)g_z(t)+\mathbbm{B}^\top_{\mathbbm{x}}(t)\mathbbm{p}(t)+\mathbbm{p}(t)\mathbbm{B}^\top_y(t)\mathbbm{p}(t)\\
		               &\qquad +\mathbbm{K}_1(t)\mathbbm{B}^\top_z(t)\mathbbm{p}(t)+\Sigma^\top_x(t)\mathbbm{q}(t)+\mathbbm{p}(t)\Sigma^\top_y(t)\mathbbm{q}(t)+\mathbbm{K}_1(t)\Sigma^\top_z(t)\mathbbm{q}(t)\Big\}\,dt\\
		               &\quad -\mathbbm{q}(t)\,dW(t),\quad t\in[0,T],\\
		 \mathbbm{p}(T)&=\begin{bmatrix}
			            \phi_x(\bar{X}(T))\\
			            0
		                 \end{bmatrix},
	\end{aligned}
	\right.
\end{equation}
where the $\mathbb{R}^{n+1}$-valued process
$$
\mathbbm{K}_1(t):=(1-\mathbbm{p}^\top(t)\Sigma_z(t))^{-1}\big[\Sigma^\top_{\mathbbm{x}}(t)\mathbbm{p}(t)+\mathbbm{p}(t)\Sigma^\top_y(t)\mathbbm{p}(t)+\mathbbm{q}(t)\big].
$$
Set
\begin{equation*}
	\mathbbm{p}:=\begin{bmatrix}
		p\\
		\tilde{p}
	\end{bmatrix},\quad
\mathbbm{q}:=\begin{bmatrix}
		q\\
		\tilde{q}
	\end{bmatrix},\quad
\mathbbm{K}_1:=\begin{bmatrix}
		K_1\\
		\tilde{K}_1
	\end{bmatrix}.
\end{equation*}
We can deduce that (\ref{First-Order-adjoint-RS-(p,q)-HighD}) has a unique solution $(p(\cdot),q(\cdot))\in L^2_{\mathcal{F}}(\Omega;C([0,T];\mathbb{R}^n))\times L^2_{\mathcal{F}}([0,T];\mathbb{R}^n)$ and $(\tilde{p}(\cdot),\tilde{q}(\cdot))\in L^2_{\mathcal{F}}(\Omega;C([0,T];\mathbb{R}))\times L^2_{\mathcal{F}}([0,T];\mathbb{R})$, and $K_1(\cdot)$ and $\tilde{K}_1(\cdot)$ are processes value in $\mathbb{R}^n$ and $\mathbb{R}$ respectively.

Recall (\ref{second-order-adjoint(PQ)}) as the following:
\begin{equation}\label{Second-Order-adjoint-RS-(P,Q)-HighD}
	\left\{
	\begin{aligned}
		-d\mathbbm{P}(t)&=\Big\{G_y(t)\mathbbm{P}(t)+G_z(t)\mathbbm{K}_2(t)+\big[\mathbbm{B}^\top_{\mathbbm{x}}(t)+\mathbbm{p}(t)\mathbbm{B}^\top_y(t)+\mathbbm{K}_1(t)\mathbbm{B}^\top_z(t)\big]\mathbbm{P}(t)\\
		&\qquad +\mathbbm{P}(t)\big[\mathbbm{B}_{\mathbbm{x}}(t)+\mathbbm{B}_y(t)\mathbbm{p}^\top(t)+\mathbbm{B}_z(t)\mathbbm{K}^\top_1(t)\big]\\
		&\qquad +\big[\Sigma^\top_{\mathbbm{x}}(t)+\mathbbm{p}(t)\Sigma^\top_y(t)+\mathbbm{K}_1(t)\Sigma^\top_z(t)\big]\mathbbm{Q}(t)\\
		&\qquad +\mathbbm{Q}(t)\big[\Sigma_{\mathbbm{x}}(t)+\Sigma_y(t)\mathbbm{p}^\top(t)+\Sigma_z(t)\mathbbm{K}^\top_1(t)\big]\\
		&\qquad +\big[\Sigma^\top_{\mathbbm{x}}(t)+\mathbbm{p}(t)\Sigma^\top_y(t)+\mathbbm{K}_1(t)\Sigma^\top_z(t)\big]\mathbbm{P}(t)\big[\Sigma_{\mathbbm{x}}(t)+\Sigma_y(t)\mathbbm{p}^\top(t)\\
        &\qquad +\Sigma_z(t)\mathbbm{K}^\top_1(t)\big]+[I_{n+1},\mathbbm{p}(t),\mathbbm{K}_1(t)]D^2\mathbbm{G}(t)[I_{n+1},\mathbbm{p}(t),\mathbbm{K}_1(t)]^\top\Big\}\,dt\\
		&\quad -\mathbbm{Q}(t)\,dW(t),\quad t\in[0,T],\\
		\mathbbm{P}(T)&=\begin{bmatrix}
			\phi_{xx}(\bar{X}(T))&0_n  \\
			0_n^\top&0
		\end{bmatrix},
	\end{aligned}
	\right.
\end{equation}
where
$$
\mathbbm{G}(t,\mathbbm{x},y,z,u,\mathbbm{p},\mathbbm{q}):=g(t,\mathbbm{x},y,z,u)+\mathbbm{p}^\top\mathbbm{B}(t,\mathbbm{x},y,z,u)+\mathbbm{q}^\top\Sigma(t,\mathbbm{x},y,z,u),
$$
and the $\mathcal{S}^{n+1}$-valued process
\begin{equation*}
	\begin{aligned} \mathbbm{K}_2(t)&:=(1-\mathbbm{p}^\top(t)\Sigma_z(t))^{-1}\Big\{\mathbbm{p}^\top(t)\Sigma_y(t)\mathbbm{P}(t)+\mathbbm{Q}(t)+\big[\Sigma^\top_{\mathbbm{x}}+\mathbbm{p}\Sigma^\top_y+\mathbbm{K}_1\Sigma^\top_z\big]\mathbbm{P}(t)\\
		        &\qquad +\mathbbm{P}(t)\big[\Sigma_{\mathbbm{x}}+\Sigma_y\mathbbm{p}^\top+\Sigma_z\mathbbm{K}^\top_1\big]
                 +\mathbbm{p}^\top(t)[I_{n+1},\mathbbm{p},\mathbbm{K}_1]D^2\Sigma(t)[I_{n+1},\mathbbm{p},\mathbbm{K}_1]^\top\Big\}.
	\end{aligned}
\end{equation*}
We know that (\ref{Second-Order-adjoint-RS-(P,Q)-HighD}) admits a unique solution $(\mathbbm{P}(\cdot),\mathbbm{Q}(\cdot))\in L^2_{\mathcal{F}}(\Omega;C([0,T];\mathcal{S}^{n+1}))\times L^2_{\mathcal{F}}([0,T];\\\mathcal{S}^{n+1})$.
Set
\begin{equation*}
\mathbbm{P}:=\begin{bmatrix}
		P&P_2  \\
		P_2^\top&P_4
	\end{bmatrix},\quad
\mathbbm{Q}:=\begin{bmatrix}
		Q&Q_2  \\
		Q_2^\top&Q_4
	\end{bmatrix},\quad
\mathbbm{K}_2:=\begin{bmatrix}
		K_2&\tilde{K}_2  \\
		\tilde{K}_2^\top&\tilde{\tilde{K}}_2
	\end{bmatrix},
\end{equation*}
where $P(\cdot)$, $Q(\cdot)$ and $K_2(\cdot)$ are $n$-dimensional matrix-valued processes.

After some calculation, we can conclude that:
\begin{equation}\label{tilde p}
	\left\{
	\begin{aligned}
		-d\tilde{p}(t)&=\Big\{\tilde{p}(t)g_y(t)+\tilde{K}_1(t)g_z(t)+\tilde{p}(t)\big[b^\top_y(t)p(t)+l_y(t)\tilde{p}(t)\big]\\
		             &\qquad +\tilde{p}(t)\sigma^\top_y(t)q(t)+\tilde{K}_1(t)\sigma_z^\top(t)q(t)\Big\}\,dt-\tilde{q}(t)\,dW(t),\quad t\in[0,T],\\
		 \tilde{p}(T)&=0,
	\end{aligned}
	\right.
\end{equation}
with
$$
\tilde{K}_1(t)=(1-\mathbbm{p}^\top(t)\Sigma_z(t))^{-1}\big[\tilde{p}(t)\sigma^\top_y(t)p(t)+\tilde{q}(t)\big].
$$
Combining with the unique solvability of BSDE (\ref{tilde p}), we can arrive at $(\tilde{p}(\cdot),\tilde{q}(\cdot))=(0,0)$, $\tilde{K}_1(\cdot)=0$. Using the similar method, we also drive that $(P_2(\cdot),Q_2(\cdot))=(0,0)$, $(P_4(\cdot),Q_4(\cdot))=(0,0)$, $\tilde{K}_2(\cdot)=0$ and $\tilde{\tilde{K}}_2(\cdot)=0$.

Thus, from (\ref{First-Order-adjoint-RS-(p,q)-HighD}) and (\ref{Second-Order-adjoint-RS-(P,Q)-HighD}), we have:
\begin{equation*}
	\left\{
	\begin{aligned}
		-dp(t)&=\Big\{ g_x(t)+p(t)g_y(t)+K_1(t)g_z(t)+b^\top_x(t)p(t)+p(t)b^\top_y(t)p(t)\\
		     &\qquad +K_1(t)b^\top_z(t)p(t)+\sigma^\top_x(t)q(t)+p(t)\sigma^\top_y(t)q(t)+K_1(t)\sigma^\top_z(t)q(t)\Big\}\,dt\\
		     &\quad -q(t)\,dW(t),\quad t\in[0,T],\\
		 p(T)&=\phi_x(\bar{X}(T)),
	\end{aligned}
	\right.
\end{equation*}
and
\begin{equation*}
	\left\{
	\begin{aligned}
		-dP(t)&=\Big\{G_y(t)P(t)+G_z(t)K_2(t)+\big[b^\top_x(t)+p(t)b^\top_y(t)+K_1(t)b^\top_z(t)\big]P(t)\\
		&\qquad +P(t)\big[b_x(t)+b_y(t)p^\top(t)+b_z(t)K^\top_1(t)\big]\\
		&\qquad +\big[\sigma^\top_x(t)+p(t)\sigma^\top_y(t)+K_1(t)\sigma^\top_z(t)\big]Q(t)\\
		&\qquad +Q(t)\big[\sigma_x(t)+\sigma_y(t)p^\top(t)+\sigma_z(t)K^\top_1(t)\big]\\
		&\qquad +\big[\sigma^\top_x(t)+p(t)\sigma^\top_y(t)+K_1(t)\sigma^\top_z(t)\big]P(t)\big[\sigma_x(t)+\sigma_y(t)p^\top(t)+\sigma_z(t)K^\top_1(t)\big]\\
		&\qquad +[I_n,p(t),K_1(t)]D^2G(t)[I_n,p(t),K_1(t)]^\top\Big\}\,dt-Q(t)\,dW(t),\quad t\in[0,T],\\
		P(T)&=\phi_{xx}(\bar{X}(T)),
	\end{aligned}
	\right.
\end{equation*}
which coincide with (\ref{first-order-RS-adjoint(pq)}) and (\ref{second-order-RS-adjoint(PQ)}) respectively.

Next, we set
\begin{equation*}
\tilde{\mathbbm{m}}:=\begin{bmatrix}
		\tilde{m}_n\\
		\tilde{m}_2
	\end{bmatrix}_{n+1},\quad
\tilde{\mathbbm{n}}:=\begin{bmatrix}
		\tilde{n}_n\\
		\tilde{n}_2
	\end{bmatrix}_{n+1},\quad
\tilde{\mathbbm{M}}:=\begin{bmatrix}
		\tilde{M}_{n\times n}&\tilde{M}_2\\
		\tilde{M}^\top_2&\tilde{M}_4
	\end{bmatrix}_{(n+1)^2},\quad
\tilde{\mathbbm{N}}:=\begin{bmatrix}
		\tilde{N}_{n\times n}&\tilde{N}_2\\
		\tilde{N}^\top_2&\tilde{N}_4
	\end{bmatrix}_{(n+1)^2}.
\end{equation*}
In the above, we use the subscript $n$, $n\times n$ and so on, to denote the dimensions of related matrix-valued processes.

Recall (\ref{First-Order-Adjoint(mnk)}) as:
\begin{equation}\label{First-Order-Adjoint(mnk)-HighD}
	\left\{
	\begin{aligned}
d\tilde{k}(t)&=\Big\{\big[\tilde{\mathbbm{m}}^\top(t)\mathbbm{B}_z(t)+\tilde{\mathbbm{n}}^\top(t)\Sigma_z(t)\big](1-\mathbbm{p}^\top(t)\Sigma_z(t))^{-1}\mathbbm{p}^\top(t)\Sigma_y(t)
                     +\tilde{\mathbbm{m}}^\top(t)\mathbbm{B}_y(t)\\
		     &\qquad +\tilde{\mathbbm{n}}^\top(t)\Sigma_y(t)+\tilde{k}(t)\big[\mathbbm{G}_y(t)+(1-\mathbbm{p}^\top(t)\Sigma_z(t))^{-1}g_z(t)\mathbbm{p}^\top(t)\Sigma_y(t)\big]\Big\}\,dt\\
		     &\quad +\Big\{\big[\tilde{\mathbbm{m}}^\top(t)\mathbbm{B}_z(t)+\tilde{\mathbbm{n}}^\top(t)\Sigma_z(t)\big](1-\mathbbm{p}^\top(t)\Sigma_z(t))^{-1}\mathbbm{p}^\top(t)\Sigma_z(t)\\
		     &\qquad\ +\tilde{\mathbbm{m}}^\top(t)\mathbbm{B}_z(t)+\tilde{\mathbbm{n}}^\top(t)\Sigma_z(t)\\
		     &\qquad\ +\tilde{k}(t)\big[\mathbbm{G}_z(t)+(1-\mathbbm{p}^\top(t)\Sigma_z(t))^{-1}g_z(t)\mathbbm{p}^\top(t)\Sigma_z(t)\big]\Big\}\,dW(t),\\
-d\tilde{\mathbbm{m}}(t)&=\Big\{\big[\mathbbm{B}^\top_{\mathbbm{x}}(t)+\mathbbm{p}(t)\mathbbm{B}^\top_y(t)+\mathbbm{K}_1(t)\mathbbm{B}^\top_z(t)\big]\tilde{\mathbbm{m}}(t)\\
		     &\qquad +\big[\Sigma^\top_{\mathbbm{x}}(t)+\mathbbm{p}(t)\Sigma^\top_y(t)+\mathbbm{K}_1(t)\Sigma^\top_z(t)\big]\tilde{\mathbbm{n}}(t)\Big\}\,dt-\tilde{\mathbbm{n}}(t)\,dW(t),\quad t\in[0,T],\\
		     \tilde{k}(0)&=\theta e^{\theta[\,\bar{\tilde{X}}(T)+f(\bar{X}(T),\bar{Y}(0))\,]}f_y(\bar{X}(T),\bar{Y}(0)),\\
\tilde{\mathbbm{m}}(T)&=\theta e^{\theta[\,\bar{\tilde{X}}(T)+f(\bar{X}(T),\bar{Y}(0))\,]}\begin{bmatrix}
			f_x(\bar{X}(T),\bar{Y}(0))\\
			1
		\end{bmatrix},\\
	\end{aligned}
	\right.
\end{equation}
which admits a unique solution $(\tilde{k}(\cdot),\tilde{\mathbbm{m}}(\cdot),\tilde{\mathbbm{n}}(\cdot))\in L^2_{\mathcal{F}}(\Omega;C([0,T];\mathbb{R}^{n+1}))\times L^2_{\mathcal{F}}(\Omega;C([0,T];\\\mathbb{R}^{n+1}))\times L^2_{\mathcal{F}}([0,T];\mathbb{R}^{n+1})$, and recall (\ref{Second-Order_Adjoint(MN)}) as:
\begin{equation}\label{Second-Order_Adjoint(MN)-HighD}
	\left\{
	\begin{aligned}
-d\tilde{\mathbbm{M}}(t)&=\Big\{\tilde{\mathbbm{M}}(t)\big[\mathbbm{B}_{\mathbbm{x}}(t)+\mathbbm{B}_y(t)\mathbbm{p}^\top(t)+\mathbbm{B}_z(t)\mathbbm{K}^\top_1(t)\big]\\
        &\qquad +\big[\mathbbm{B}^\top_{\mathbbm{x}}(t)+\mathbbm{p}(t)\mathbbm{B}^\top_y(t)+\mathbbm{K}_1(t)\mathbbm{B}^\top_z(t)\big]\tilde{\mathbbm{M}}(t)\\
	    &\qquad +\big[\Sigma^\top_{\mathbbm{x}}(t)+\mathbbm{p}(t)\Sigma^\top_y(t)+\mathbbm{K}_1(t)\Sigma^\top_z(t)\big]\tilde{\mathbbm{M}}(t)
         \big[\Sigma_{\mathbbm{x}}(t)+\Sigma_y(t)\mathbbm{p}^\top(t)\\
		&\qquad +\Sigma_z(t)\mathbbm{K}^\top_1(t)\big]+\tilde{\mathbbm{m}}^\top(t)\big[\mathbbm{B}_y(t)\mathbbm{P}(t)+\mathbbm{B}_z(t)\mathbbm{K}_2(t)\\
        &\qquad +[I_{n+1},\mathbbm{p}(t),\mathbbm{K}_1(t)]D^2\mathbbm{B}(t)[I_{n+1},\mathbbm{p}(t),\mathbbm{K}_1(t)]^\top\big]+\tilde{\mathbbm{n}}^\top(t)\big[\Sigma_y(t)\mathbbm{P}(t)\\
		&\qquad +\Sigma_z(t)\mathbbm{K}_2(t)+[I_{n+1},\mathbbm{p}(t),\mathbbm{K}_1(t)]D^2\Sigma(t)[I_{n+1},\mathbbm{p}(t),\mathbbm{K}_1(t)]^\top\big]\Big\}\,dt\\
		&\quad -\tilde{\mathbbm{N}}(t)\,dW(t),\quad t\in[0,T],\\
\tilde{\mathbbm{M}}(T)&=\theta e^{\theta[\,\bar{\tilde{X}}(T)+f(\bar{X}(T),\bar{Y}(0))\,]}\begin{bmatrix}
			\bar{f}_{xx}+\theta \bar{f}_x\bar{f}_x^{\top}&\theta \bar{f}_x  \\
			\theta \bar{f}_x^{\top}&\theta
		\end{bmatrix},
	\end{aligned}
	\right.
\end{equation}
which admits a unique solution $(\tilde{\mathbbm{M}}(\cdot),\tilde{\mathbbm{N}}(\cdot))\in L^2_{\mathcal{F}}(\Omega;C([0,T];\mathcal{S}^{n+1}))\times L^2_{\mathcal{F}}([0,T];\mathcal{S}^{n+1})$.

\subsection{Non-linear transformations}

We can get the risk-sensitive global maximum principle from Lemma \ref{SMP-MayerType} directly, since we have established the first- and second-order adjoint equations we need. However, it is not an ideal one since the adjoint variables involve auxiliary and unnecessary components. It is to say that $\tilde{\mathbbm{m}}(\cdot)$, $\tilde{\mathbbm{n}}(\cdot)$, $\tilde{\mathbbm{M}}(\cdot)$ and $\tilde{\mathbbm{N}}(\cdot)$ are of dimension $n+1$ whereas state $X(\cdot)$ is of dimension $n$.

In this section, we introduce some non-linear transformations to eliminate the additional parts of $(\tilde{\mathbbm{m}}(\cdot),\tilde{\mathbbm{n}}(\cdot))$ and $(\tilde{\mathbbm{M}}(\cdot),\tilde{\mathbbm{N}}(\cdot))$, which finally lead to Theorem \ref{SMP-RS}.

\subsubsection{Transformation of the first-order processes}

Form (\ref{First-Order-Adjoint(mnk)-HighD}), we can easily conclude that:
\begin{equation}
	\left\{
	\begin{aligned}
		d\tilde{m}_2(t)&=\tilde{n}_2(t)\,dW(t),\quad t\in[0,T],\\
		\tilde{m}_2(T)&=\theta e^{\theta\big[\,\bar{\tilde{X}}(T)+f(\bar{X}(T),\bar{Y}(0))\,\big]}.
	\end{aligned}
	\right.
\end{equation}
As in Moon \cite{Moon20}, its $\mathcal{F}_t$-adapted solution can be written as:
$$\tilde{m}_2(t)=\mathbb{E}\Big[\theta e^{\theta\big[\,\bar{\tilde{X}}(T)+f(\bar{X}(T),\bar{Y}(0))\,\big]}\,\Big|\,\mathcal{F}_t\Big]:=\theta\zeta(t),$$
where $\zeta(\cdot)$ is a martingale values in $\mathbb{R}$ such that $\zeta(t)>0$, $\mathbb{P}$-a.s., for $t\in[0,T]$.

Introducing the following non-linear transformation
\begin{equation}\label{transformation first-order}
	\begin{aligned}
\mathbbm{m}(t)=\frac{\tilde{\mathbbm{m}}(t)}{\theta\zeta(t)}:=\begin{bmatrix}
	m(t)\\
	m_2(t)
\end{bmatrix},
	\end{aligned}
\end{equation}
which makes the terminal condition of (\ref{First-Order-Adjoint(mnk)-HighD}) become
$$
\mathbbm{m}(T)=\begin{bmatrix}
	f_x(\bar{X}(T),\bar{Y}(0))\\
	1
\end{bmatrix}.
$$

Before we can apply It\^o's formula to $\mathbbm{m}(\cdot)$, we first have to compute $d\frac{1}{\zeta(\cdot)}$, in other words, $d\zeta(\cdot)$. In fact, we have
\begin{equation*}
	\begin{aligned}
		\zeta(t)&=\mathbb{E}\Big[e^{\theta\big[\,\bar{\tilde{X}}(T)+f(\bar{X}(T),\bar{Y}(0))\,\big]}\,\Big|\,\mathcal{F}_t\Big]\\
		&=e^{\theta\int_0^tl(s,\bar{X},\bar{Y},\bar{Z},\bar{u})\,ds}\times\mathbb{E}\Big[e^{\theta\big[\,\int_t^Tl(s,\bar{X},\bar{Y},\bar{Z},\bar{u})\,ds+f(\bar{X}(T),\bar{Y}(0))\,\big]}\,\Big|\,\mathcal{F}_t\Big]
		:=e^{\theta\int_0^tl(s,\bar{X},\bar{Y},\bar{Z},\bar{u})\,ds}\bar{\zeta}(t).
	\end{aligned}
\end{equation*}

Assuming that $\bar{\zeta}(t)=e^{\theta\gamma(t)}$ and
\begin{equation*}
	\left\{
	\begin{aligned}
		d\gamma(t)&=\mathcal{A}(t)\,dt+\kappa(t)\,dW(t),\quad t\in[0,T],\\
		\gamma(T)&=f(\bar{X}(T),\bar{Y}(0)),
	\end{aligned}
	\right.
\end{equation*}
where $\mathcal{A}(\cdot)$ is to be determined. Define $\bar{\gamma}(t):=e^{\theta\gamma(t)}$, we now choose suitable $\mathcal{A}(t)$ such that $\bar{\gamma}(t)=\bar{\zeta}(t)$. By It\^o's formula,
\begin{equation*}
		d\bar{\gamma}(t)=\theta\bar{\gamma}(t)\Big\{\Big[\mathcal{A}(t)+\frac{\theta}{2}\kappa^2(t)\Big]\,dt+\kappa(t)\,dW(t)\Big\}.
\end{equation*}
Set $\rho(t):=e^{\theta\int_{0}^{t}l(s,\bar{X},\bar{Y},\bar{Z},\bar{u})\,ds}$. Then we can easily deduce that
$$
d\rho(t)\bar{\gamma}(t)=\theta\rho(t)\bar{\gamma}(t)\Big\{\Big[\mathcal{A}(t)+l(t,\bar{X},\bar{Y},\bar{Z},\bar{u})+\frac{\theta}{2}\kappa^2(t)\Big]\,dt+\kappa(t)\,dW(t)\Big\}.
$$
Integral from $t$ to $T$, we have
\begin{equation*}
	\begin{aligned}
		\rho(T)\bar{\gamma}(T)-\rho(t)\bar{\gamma}(t)&=\int_t^T\theta\rho(s)\bar{\gamma}(s)\Big[\mathcal{A}(s)+l(s,\bar{X},\bar{Y},\bar{Z},\bar{u})+\frac{\theta}{2}\kappa^2(s)\Big]\,ds\\
		&\quad+\int_{t}^{T}\theta\rho(s)\bar{\gamma}(s)\kappa(s)\,dW(s).
	\end{aligned}
\end{equation*}
Notice that
$\rho(T)\bar{\gamma}(T)=e^{\theta\int_0^Tl(t,\bar{X},\bar{Y},\bar{Z},\bar{u})\,dt+\theta f(\bar{X}(T),\bar{Y}(0))}$, we arrive at
\begin{equation*}
	\begin{aligned}
		\bar{\gamma}(t)&=e^{\theta\int_t^Tl(s,\bar{X},\bar{Y},\bar{Z},\bar{u})\,ds+\theta f(\bar{X}(T),\bar{Y}(0))}-\rho^{-1}(t)\int_t^T\theta\rho(s)\bar{\gamma}(s)\kappa(s)\,dW(s)\\
		&\quad-\rho^{-1}(t)\int_t^T\theta\rho(s)\bar{\gamma}(s)\Big[\mathcal{A}(s)+l(s,\bar{X},\bar{Y},\bar{Z},\bar{u})+\frac{\theta}{2}\kappa^2(s)\Big]\,ds.
	\end{aligned}
\end{equation*}
Since $\bar{\gamma}(t)=e^{\theta\gamma(t)}$ is $\mathcal{F}_t$-adapted, then we have
\begin{equation*}
	\begin{aligned}
		\bar{\gamma}(t)&=\mathbb{E}\Big[e^{\theta\int_t^Tl(s,\bar{X},\bar{Y},\bar{Z},\bar{u})\,ds+\theta f(\bar{X}(T),\bar{Y}(0))}\,\Big|\,\mathcal{F}_t\Big]\\
		&\quad-\mathbb{E}\bigg[\rho^{-1}(t)\int_t^T\theta\rho(s)\bar{\gamma}(s)\big[\mathcal{A}(s)+l(s,\bar{X},\bar{Y},\bar{Z},\bar{u})+\frac{\theta}{2}\kappa^2(s)\big]\,ds\,\Big|\,\mathcal{F}_t\bigg]\\
		&=\bar{\zeta}(t)-\mathbb{E}\bigg[\rho^{-1}(t)\int_{t}^{T}\theta\rho(s)\bar{\gamma}(s)\big[\mathcal{A}(s)+l(s,\bar{X},\bar{Y},\bar{Z},\bar{u})+\frac{\theta}{2}\kappa^2(s)\big]\,ds\,\Big|\,\mathcal{F}_t\bigg].
	\end{aligned}
\end{equation*}
Set $\mathcal{A}(t):=-\big[l(t,\bar{X},\bar{Y},\bar{Z},\bar{u})+\frac{\theta}{2}\kappa^2(t)\big]$, then we finally conclude that $\bar{\gamma}(t)=\bar{\zeta}(t)=e^{\theta\gamma(t)}$, where $\gamma(\cdot)$ is the solution of the following quadratic growth BSDE, whose unique solvability can be found in El Karoui and Hamadene \cite{EH03} or Zhang \cite{Zhang17}:
\begin{equation}\label{Equation-gamma-kappa}
	\left\{
	\begin{aligned}
		d\gamma(t)&=-\Big[l(t,\bar{X}(t),\bar{Y}(t),\bar{Z}(t),\bar{u}(t))+\frac{\theta}{2}\kappa^2(t)\Big]\,dt+\kappa(t)\,dW(t),\quad t\in[0,T],\\
		\gamma(T)&=f(\bar{X}(T),\bar{Y}(0)).
	\end{aligned}
	\right.
\end{equation}

From It\^o's formula, we arrive at
\begin{equation}\label{Equation-zeta-kappa}
	\left\{
	\begin{aligned}
		d\zeta(t)&=\theta\zeta(t)\kappa(t)\,dW(t),\quad t\in[0,T],\\
		\zeta(T)&=e^{\theta\big[\,\bar{\tilde{X}}(T)+f(\bar{X}(T),\bar{Y}(0))\,\big]},
	\end{aligned}
	\right.
\end{equation}
and
\begin{equation*}
	d\frac{1}{\zeta(t)}=\frac{\theta^2\kappa^2(t)}{\zeta(t)}\,dt-\frac{\theta\kappa(t)}{\zeta(t)}\,dW(t).
\end{equation*}
Hence, by using It\^o's formula again, we get
\begin{equation}\label{First-order-RS-adjoint(mn)-HighD-trans}
\left\{
	\begin{aligned}
		-d\mathbbm{m}(t)&=\Big\{\big[\mathbbm{B}^\top_{\mathbbm{x}}(t)+\mathbbm{p}(t)\mathbbm{B}^\top_y(t)+\mathbbm{K}_1(t)\mathbbm{B}^\top_z(t)\big]\mathbbm{m}(t)
         +\big[\Sigma^\top_{\mathbbm{x}}(t)+\mathbbm{p}(t)\Sigma^\top_y(t)+\mathbbm{K}_1(t)\\
		&\qquad \times\Sigma^\top_z(t)\big](\theta\kappa(t)\mathbbm{m}(t)+\mathbbm{n}(t))+\theta\kappa(t)\mathbbm{n}(t)\Big\}\,dt-\mathbbm{n}(t)\,dW(t),\ t\in[0,T],\\
		\mathbbm{m}(T)&=\begin{bmatrix}
			f_x(\bar{X}(T),\bar{Y}(0))\\
			1
		\end{bmatrix},
	\end{aligned}
\right.
\end{equation}
where
\begin{equation}\label{transformation first-order---}
	\mathbbm{m}(t)=\frac{\tilde{\mathbbm{m}}(t)}{\theta\zeta(t)}:=\begin{bmatrix}
		m(t)\\
		m_2(t)
	\end{bmatrix},\quad\mathbbm{n}(t)=\frac{\tilde{\mathbbm{n}}(t)}{\theta\zeta(t)}-\theta\kappa(t)\mathbbm{m}(t):=\begin{bmatrix}
		n(t)\\
		n_2(t)
	\end{bmatrix}.
\end{equation}

As before, consider the scalar components $(m_2(\cdot),n_2(\cdot))$, we have
\begin{equation*}
	\left\{
	\begin{aligned}
		-dm_2(t)&=\theta\kappa(t)n_2(t)\,dt-n_2(t)\,dW(t),\quad t\in[0,T],\\
		m_2(T)&=1.
	\end{aligned}
	\right.
\end{equation*}
Then the unique solvability of BSDEs guarantees that $(m_2(t),n_2(t))=(1,0)$, $\mathbb{P}$-a.s., $t\in[0,T]$.

Therefore, from (\ref{First-order-RS-adjoint(mn)-HighD-trans}), $(m(\cdot),n(\cdot))$ satisfies
\begin{equation*}
	\left\{
	\begin{aligned}
		-dm(t)&=\Big\{\big[b^\top_x(t)+p(t)b^\top_y(t)+K_1(t)b^\top_z(t)\big]m(t)+\big[l^\top_x(t)+p(t)l^\top_y(t)+K_1(t)l^\top_z(t)\big]\\
		&\qquad +\big[\sigma^\top_x(t)+p(t)\sigma^\top_y(t)+K_1(t)\sigma^\top_z(t)\big](\theta\kappa(t)m(t)+n(t))\\
		&\qquad +\theta\kappa(t)n(t)\Big\}\,dt-n(t)\,dW(t),\quad t\in[0,T],\\
		m(T)&=f_x(\bar{X}(T),\bar{Y}(0)),
	\end{aligned}
	\right.
\end{equation*}
which coincides with the backward equation of (\ref{First-order Risk-sensitive Adjoint Equation(mnk)}).

Moreover, set $k(t)=\frac{\tilde{k}(t)}{\theta\zeta(t)}$. Then by It\^o's formula, we deduce that
\begin{equation*}
	\left\{
	\begin{aligned}
   dk(t)&=\Big\{\big[m^\top(t)b_z(t)+l_z(t)+(n^\top(t)+\theta\kappa(t)m^\top(t))\sigma_z(t)\big](1-p^\top(t)\sigma_z(t))^{-1}p^\top(t)\sigma_y(t)\\
		&\qquad +m^\top(t)b_y(t)+l_y(t)+(n^\top(t)+\theta\kappa(t)m^\top(t))\sigma_y(t)+k(t)\big[g_y(t)\\
		&\qquad +p^\top(t)b_y(t)+l_y(t)+q^\top(t)\sigma_y(t)+(1-p^\top(t)\sigma_z(t))^{-1}g_z(t)p^\top(t)\sigma_y(t)\big]\\
		&\qquad +\theta^2\kappa^2(t)k(t)-\theta\kappa(t)\big[m^\top(t)b_z(t)+l_z(t)+(n^\top(t)+\theta\kappa(t)m^\top(t))\sigma_z(t)\big]\\
		&\qquad -\theta\kappa(t)\big[m^\top(t)b_z(t)+l_z(t)+(n^\top(t)+\theta\kappa(t)m^\top(t))\sigma_z(t)\big]\\
        &\qquad \times(1-p^\top(t)\sigma_z(t))^{-1}p^\top(t)\sigma_z(t)-\theta\kappa(t)k(t)\big[g_z(t)+p^\top(t)b_z(t)+l_z(t)\\
		&\qquad +q^\top(t)\sigma_z(t)+(1-p^\top(t)\sigma_z(t))^{-1}g_z(t)p^\top(t)\sigma_z(t)\big]\Big\}\,dt\\
		&\quad +\Big\{\big[m^\top(t)b_z(t)+l_z(t)+(n^\top(t)+\theta\kappa(t)m^\top(t))\sigma_z(t)\big]\\
        &\qquad\ \times(1-p^\top(t)\sigma_z(t))^{-1}p^\top(t)\sigma_z(t)+k(t)\big[g_z(t)+p^\top(t)b_z(t)+l_z(t)\\
		&\qquad\ +q^\top(t)\sigma_z(t)+(1-p^\top(t)\sigma_z(t))^{-1}g_z(t)p^\top(t)\sigma_z(t)\big]+m^\top(t)b_z(t)\\
		&\qquad\ +l_z(t)+(n^\top(t)+\theta\kappa(t)m^\top(t))\sigma_z(t)-\theta\kappa(t)k(t)\Big\}\,dW(t),\quad t\in[0,T],\\
	k(0)&=f_y(\bar{X}(T),\bar{Y}(0)),
	\end{aligned}
	\right.
\end{equation*}
which coincides with the forward equation of (\ref{First-order Risk-sensitive Adjoint Equation(mnk)}).

\begin{Remark}\label{Remark-unique-solv-first-order-RS(mnk)-trans}
	(\ref{First-Order-Adjoint(mnk)-HighD}) is a non-coupled FBSDE. With the help of classical BSDE theory, (\ref{First-Order-Adjoint(mnk)-HighD}) has a unique solution $(\tilde{\mathbbm{m}}(\cdot),\tilde{\mathbbm{n}}(\cdot))$ since it is not coupled with the forward term $\tilde{k}(\cdot)$. Hence, the unique solvability of (\ref{First-order Risk-sensitive Adjoint Equation(mnk)}) is guaranteed by the unique solvability of (\ref{First-Order-Adjoint(mnk)-HighD}) and (\ref{Equation-zeta-kappa}).
\end{Remark}

\subsubsection{Transformation of the second-order processes and maximum condition}

In this section, we consider the transformation of the second-order processes in (\ref{Second-Order_Adjoint(MN)-HighD}).

Similar as the previous section, we introduce the transformation
\begin{equation}\label{transformation second-order}
	\mathbbm{M}(t)=\frac{\tilde{\mathbbm{M}}(t)}{\theta\zeta(t)}-\theta\mathbbm{m}(t)\mathbbm{m}^\top(t):=\begin{bmatrix}
		M(t)&M_2(t)  \\
		M_2^\top(t)& M_4(t)
	\end{bmatrix},
\end{equation}
and
\begin{equation}\label{transformation second-order---}
	\begin{aligned}
		\mathbbm{N}(t)&=\frac{\tilde{\mathbbm{N}}(t)}{\theta\zeta(t)}-\theta\kappa(t)\big(\mathbbm{M}(t)+\theta\mathbbm{m}(t)\mathbbm{m}^\top(t)\big)
         -\theta\mathbbm{m}(t)\mathbbm{n}^\top(t)-\theta \mathbbm{n}(t)\mathbbm{m}^\top(t)\\
		&:=\begin{bmatrix}
			N(t)&N_2(t)  \\
			N_2^\top(t)& N_4(t)
		\end{bmatrix}.
	\end{aligned}
\end{equation}

By using It\^o's formula, we have
\begin{equation}\label{Second-order-RS-adjoint-(MN)-HighD-trans}
	\left\{
	\begin{aligned}
-d\mathbbm{M}(t)&=\Big\{\mathbbm{M}(t)\big[\mathbbm{B}_{\mathbbm{x}}(t)+\mathbbm{B}_y(t)\mathbbm{p}^\top(t)+\mathbbm{B}_z(t)\mathbbm{K}^\top_1(t)\big]
                 +\big[\mathbbm{B}^\top_{\mathbbm{x}}(t)+\mathbbm{p}(t)\mathbbm{B}^\top_y(t)\\
                &\qquad +\mathbbm{K}_1(t)\mathbbm{B}^\top_z(t)\big]\mathbbm{M}(t)+\big[\Sigma^\top_{\mathbbm{x}}(t)+\mathbbm{p}(t)\Sigma^\top_y(t)+\mathbbm{K}_1(t)\Sigma^\top_z(t)\big]\big[\mathbbm{M}(t)\\
		        &\qquad +\theta\mathbbm{m}(t)\mathbbm{m}^\top(t)\big]\big[\Sigma_{\mathbbm{x}}(t)+\Sigma_y(t)\mathbbm{p}^\top(t)+\Sigma_z(t)\mathbbm{K}^\top_1(t)\big]\\
		        &\qquad +\big[\Sigma^\top_{\mathbbm{x}}(t)+\mathbbm{p}(t)\Sigma^\top_y(t)+\mathbbm{K}_1(t)\Sigma^\top_z(t)\big]\big[\mathbbm{N}(t)+\theta\kappa(t)\mathbbm{M}(t)+\theta\mathbbm{m}(t)\mathbbm{n}^\top(t)\big]\\
		        &\qquad +\big[\mathbbm{N}(t)+\theta\kappa(t)\mathbbm{M}(t)+\theta\mathbbm{n}(t)\mathbbm{m}^\top(t)\big]\big[\Sigma_{\mathbbm{x}}(t)+\Sigma_y(t)\mathbbm{p}^\top(t)+\Sigma_z(t)\mathbbm{K}_1^\top(t)\big]\\
		        &\qquad +\mathbbm{m}^\top(t)\big[\mathbbm{B}_y(t)\mathbbm{P}(t)+\mathbbm{B}_z(t)\mathbbm{K}_2(t)\\
                &\qquad\quad +[I_{n+1},\mathbbm{p}(t),\mathbbm{K}_1(t)]D^2\mathbbm{B}(t)[I_{n+1},\mathbbm{p}(t),\mathbbm{K}_1(t)]^\top\big]\\		
                &\qquad +(\mathbbm{n}(t)+\theta\kappa(t)\mathbbm{m}(t))^\top\big[\Sigma_y(t)\mathbbm{P}(t)+\Sigma_z(t)\mathbbm{K}_2(t)\\
                &\qquad\quad +[I_{n+1},\mathbbm{p}(t),\mathbbm{K}_1(t)]D^2\Sigma(t)[I_{n+1},\mathbbm{p}(t),\mathbbm{K}_1(t)]^\top\big]\\
		        &\qquad +\theta\kappa(t)\mathbbm{N}(t)-\theta\mathbbm{n}(t)\mathbbm{n}^\top(t)\Big\}\,dt-\mathbbm{N}(t)\,dW(t),\quad t\in[0,T],\\
  \mathbbm{M}(T)&=\begin{bmatrix}
			f_{xx}(\bar{X}(T),\bar{Y}(0))&0_n \\
			0_n^\top&0
		\end{bmatrix},
	\end{aligned}
	\right.
\end{equation}
which is uniquely solvable as discussed in Remark \ref{Remark-unique-solv-first-order-RS(mnk)-trans}.

Consider the matrix form here, we can show that
\begin{equation*}
	\left\{
	\begin{aligned}
		-dM_4(t)&=\theta\kappa(t)N_4(t)\,dt-N_4(t)\,dW(t),\quad t\in[0,T],\\
		  M_4(T)&=0,
	\end{aligned}
	\right.
\end{equation*}
which has the unique solution $(M_4(\cdot),N_4(\cdot))=(0,0)$.

Further, the equation of $(M_2(\cdot),N_2(\cdot))$ is the following:
\begin{equation*}
	\left\{
	\begin{aligned}
		-dM_2(t)&=\Big\{\big[b_x^\top(t)+p(t)b_y^\top(t)+K_1(t)b_z^\top(t)\big]M_2(t)+\big[\sigma_x^\top(t)+p(t)\sigma_y^\top(t)\\
		        &\quad +K_1(t)\sigma_z^\top(t)\big]\big[N_2(t)+\theta\kappa(t)M_2(t)\big]+\theta\kappa(t)N_2(t)\Big\}\,dt-N_2(t)\,dW(t),\quad t\in[0,T],\\
		  M_2(T)&=0,
	\end{aligned}
	\right.
\end{equation*}
which also implies that $(M_2(\cdot),N_2(\cdot))=(0,0)$ is the unique solution.

Hence, $(M(\cdot),N(\cdot))$ is the unique solution given by
\begin{equation*}
	\left\{
	\begin{aligned}
  -dM(t)&=\Big\{M(t)\big[b_x(t)+b_y(t)p^\top(t)+b_z(t)K^\top_1(t)\big]\\
        &\qquad +\big[b^\top_x(t)+p(t)b^\top_y(t)+K_1(t)b^\top_z(t)\big]M(t)\\
		&\qquad +\big[\sigma^\top_x(t)+p(t)\sigma^\top_y(t)+K_1(t)\sigma^\top_z(t)\big]\big[M(t)+\theta m(t)m^\top(t)\big]\\
		&\qquad\quad \times\big[\sigma_x(t)+\sigma_y(t)p^\top(t)+\sigma_z(t)K^\top_1(t)\big]\\
		&\qquad +\big[\sigma^\top_x(t)+p(t)\sigma^\top_y(t)+K_1(t)\sigma^\top_z(t)\big]\big[N(t)+\theta\kappa(t)M(t)+\theta m(t)n^\top(t)\big]\\
		&\qquad +\big[N(t)+\theta\kappa(t)M(t)+\theta n(t)m^\top(t)\big]\big[\sigma_x(t)+\sigma_y(t)p^\top(t)+\sigma_z(t)K_1^\top(t)\big]\\
		&\qquad +\big[m^\top(t)b_y(t)+l_y(t)\big]P(t)+\big[m^\top(t)b_z(t)+l_z(t)\big]K_2(t)\\
        &\qquad +\theta\kappa(t)N(t)-\theta n(t)n^\top(t)+m^\top(t)[I_n,p(t),K_1(t)]D^2b(t)[I_n,p(t),K_1(t)]^\top\\
		&\qquad +[I_n,p(t),K_1(t)]D^2l(t)[I_n,p(t),K_1(t)]^\top\\
		&\qquad +(n(t)+\theta\kappa(t)m(t))^\top[I_n,p(t),K_1(t)]D^2\sigma(t)[I_n,p(t),K_1(t)]^\top\Big\}\,dt\\
        &\quad -N(t)\,dW(t),\quad t\in[0,T],\\
	M(T)&=f_{xx}(\bar{X}(T),\bar{Y}(0)),
	\end{aligned}
	\right.
\end{equation*}
which is equivalent to (\ref{Second-order Risk-sensitive Adjoint Equation(MN)}).

Now, we consider the  maximum condition. Recall the risk-neutral Hamiltonian function (\ref{Hamiltonian-RN}) and combine with transformations given above, we have
\begin{equation}
	\begin{aligned}
		&\tilde{\mathcal{H}}^{RN}(t,x,y,z,u,\mathbbm{p},\mathbbm{q},\mathbbm{P},\tilde{k},\tilde{\mathbbm{m}},\tilde{\mathbbm{n}},\tilde{\mathbbm{M}})\\
		&:=(\tilde{\mathbbm{m}}^\top+\tilde{k}\mathbbm{p}^\top)\mathbbm{B}(t,x,y,z+\Delta,u)+(\tilde{\mathbbm{n}}^\top+\tilde{k}\mathbbm{q}^\top)\Sigma(t,x,y,z+\Delta,u)\\
		&\quad +\tilde{k}g(t,x,y,z+\Delta,u)+\frac{1}{2}\big[\Sigma(t,x,y,z+\Delta,u)-\Sigma(t,\bar{X},\bar{Y},\bar{Z},\bar{u})\big]^\top\\
		&\quad\times(\tilde{\mathbbm{M}}+\tilde{k}\mathbbm{P})\big[\Sigma(t,x,y,z+\Delta,u)-\Sigma(t,\bar{X},\bar{Y},\bar{Z},\bar{u})\big]\\
		&=\theta\zeta(t)\Big\{(m^\top+kp^\top)b(t)+l(t)+(n^\top(t)+\theta\kappa m^\top+kq^\top)\sigma(t)+kg(t)\\
		&\qquad +\frac{1}{2}\big[\sigma(t,x,y,z+\Delta,u)-\sigma(t,\bar{X},\bar{Y},\bar{Z},\bar{u})\big]^\top(M+\theta mm^\top+kP)\\
		&\qquad \times\big[\sigma(t,x,y,z+\Delta,u)-\sigma(t,\bar{X},\bar{Y},\bar{Z},\bar{u})\big]\Big\}\\
		&:=\theta\zeta(t)\mathcal{H}_\theta^{RS}(t,x,y,z,u,p,q,k,m,n,P,M,\kappa).
	\end{aligned}
\end{equation}
By Lemma \ref{SMP-MayerType}, we arrive at
\begin{equation*}
	\begin{aligned}
		&\tilde{\mathcal{H}}^{RN}(t,\bar{X}(t),\bar{Y}(t),\bar{Z}(t),u,\mathbbm{p}(t),\mathbbm{q}(t),\mathbbm{P}(t),\tilde{k}(t),\tilde{\mathbbm{m}}(t),\tilde{\mathbbm{n}}(t),\tilde{\mathbbm{M}}(t))\\
		&-\tilde{\mathcal{H}}^{RN}(t,\bar{X}(t),\bar{Y}(t),\bar{Z}(t),\bar{u}(t),\mathbbm{p}(t),\mathbbm{q}(t),\mathbbm{P}(t),\tilde{k}(t),\tilde{\mathbbm{m}}(t),\tilde{\mathbbm{n}}(t),\tilde{\mathbbm{M}}(t))\geq0,\\
		&\hspace{6cm}\forall u\in U,\quad a.e.,\quad \mathbb{P}\mbox{-}a.s..
	\end{aligned}
\end{equation*}

Notice that $\zeta(t)>0$, $\mathbb{P}$-a.s., $t\in[0,T]$, then we get the maximum condition (\ref{maximum condition-RS}):
\begin{equation*}
	\begin{aligned}
		&\theta\,\mathcal{H}_\theta^{RS}(t,\bar{X}(t),\bar{Y}(t),\bar{Z}(t),u,p(t),q(t),m(t),n(t),k(t),P(t),M(t),\kappa(t))\\
		&\geq\theta\,\mathcal{H}_\theta^{RS}(t,\bar{X}(t),\bar{Y}(t),\bar{Z}(t),\bar{u}(t),p(t),q(t),m(t),n(t),k(t),P(t),M(t),\kappa(t)),\\
		&\hspace{6cm}\forall u\in U,\quad a.e.,\quad \mathbb{P}\mbox{-}a.s..
	\end{aligned}
\end{equation*}
This completes the proof of Theorem \ref{SMP-RS}.

\section{A linear-exponential-quadratic example}

In this section, we discuss an LEQ problem. By using Girsanov's Theorem and completion-of-squares technique, optimal feedback control and optimal cost are obtained. The Riccati equations and related ordinary differential equations (ODEs, for short) are established in risk-sensitive forms, which means the risk-sensitive parameter appears in their equations.

For convenience, we only consider the one-dimension case and the time variable may be omitted when no confusion appears. The state equation is the following fully coupled linear FBSDE:
\begin{equation}\label{LEQ-state-equation}
	\left\{
	\begin{aligned}
		dX(t)&=[A_1(t)X(t)+B_1(t)Y(t)+C_1(t)Z(t)+D_1(t)u(t)+b(t)]\,dt+\sigma(t)\,dW(t),\\
		-dY(t)&=[A_2(t)X(t)+B_2(t)Y(t)+C_2(t)Z(t)+D_2(t)u(t)+g(t)]\,dt\\
        &\quad -Z(t)\,dW(t),\quad t\in[0,T],\\
		X(0)&=x_0,\quad Y(T)=GX(T).
	\end{aligned}
	\right.
\end{equation}

Introduce the following cost functional, where all square terms and cross terms of $X$, $Y$, $Z$ and $u$ can appear in:
\begin{equation}\label{LEQ-csot functional}
	\begin{aligned}
		&J(u(\cdot))=\mathbb{E}\Bigg[\exp\bigg\{\frac{\theta}{2}\bigg(S_1X^2(T)+S_2Y^2(0)\\
        &\quad +\int_0^T\begin{pmatrix}X(t),Y(t),Z(t),u(t)\end{pmatrix}
        \begin{pmatrix}
			R_{11}(t)&R_{12}(t)  &R_{13}(t)  & R_{14}(t) \\
			R_{12}(t)	&R_{22}(t)  &R_{23}(t)  &R_{24}(t)  \\
			R_{13}(t)&R_{23}(t)  &R_{33}(t)  &R_{34}(t)  \\
			R_{14}(t)& R_{24}(t) &R_{34}(t)  & R_{44}(t)
		\end{pmatrix}\begin{pmatrix}
			X(t)\\
			Y(t)\\
			Z(t)\\
			u(t)
		\end{pmatrix}\,dt\bigg)\bigg\}\Bigg].
	\end{aligned}
\end{equation}
Accordingly, the following assumptions are imposed.
\begin{myassum}\label{LEQ-assum}
	\begin{itemize}
		\item[(i)] $\varphi_i(\cdot)$, $\varphi=A,B,C,D,i=1,2$, $\sigma(\cdot)$, $R_{ij}(\cdot)$, $i,j=1,2,3,4$ are deterministic and bounded, and $x_0$, $G$, $S_1$ and $S_2$ are constants;
		\item[(ii)] $R_{44}(\cdot)$ is uniformly positive definite; $\theta\textgreater0$ is fixed.
	\end{itemize}
\end{myassum}

Now we use the completion-of-squares technique to determine the optimal feedback control as well as the optimal cost. First, for an admissible quadruple  $(\bar{X}(\cdot),\bar{Y}(\cdot),\bar{Z}(\cdot),\bar{u}(\cdot))$, we decouple the state equation (\ref{LEQ-state-equation}) with
\begin{equation}\label{LEQ-decouple-state}
	\bar{Y}(\cdot)=\beta_1(\cdot)\bar{X}(\cdot)+\beta_2(\cdot),
\end{equation}
where the equations that $\beta_1(\cdot)$ and $\beta_2(\cdot)$ satisfy will be determined later such that $\beta_1(T)=G$, $\beta_2(T)=0$. By It\^o's formula, we can deduce that $\bar{Z}(\cdot)=\beta_1(\cdot)\sigma(\cdot)$, $\mathbb{P}\mbox{-}a.s..$ Hence, the SDE satisfied by $\bar{X}(\cdot)$ takes the following form:
\begin{equation}\label{LEQ-SDE-opti X}
	\left\{
	\begin{aligned}
		d\bar{X}(t)&=\big[(A_1(t)+B_1(t)\beta_1(t))\bar{X}(t)+B_1(t)\beta_2(t)+C_1(t)\beta_1(t)\sigma(t)+D_1(t)\bar{u}(t)+b(t)\big]\,dt\\
		&\quad +\sigma(t)\,dW(t),\quad t\in[0,T],\\
		X(0)&=x_0.
	\end{aligned}
	\right.
\end{equation}

As mentioned before, (\ref{Equation-gamma-kappa}) plays an important role in solving the risk-sensitive optimal control problem. In this LEQ problem of the present section, (\ref{Equation-gamma-kappa}) writes
\begin{equation}
	\left\{
	\begin{aligned}
		d\gamma(t)&=-\Bigg\{\frac{1}{2}\begin{pmatrix}
			\bar{X}\\
			\beta_1\bar{X}+\beta_2\\
			\beta_1\sigma\\
			\bar{u}
		\end{pmatrix}^{\top}\begin{pmatrix}
			R_{11}&R_{12}  &R_{13}  & R_{14} \\
			R_{12}	&R_{22}  &R_{23}  &R_{24}  \\
			R_{13}&R_{23} &R_{33}  &R_{34}  \\
			R_{14}& R_{24} &R_{34}  & R_{44}
		\end{pmatrix}\begin{pmatrix}
			\bar{X}\\
			\beta_1\bar{X}+\beta_2\\
			\beta_1\sigma\\
			\bar{u}
		\end{pmatrix}+\frac{\theta}{2}\kappa^2(t)\Bigg\}\,dt\\
		&\quad +\kappa(t)\,dW(t),\quad t\in[0,T],\\
		\gamma(T)&=\frac{1}{2}S_1\bar{X}^2(T)+\frac{1}{2}S_2\bar{Y}^2(0).
	\end{aligned}
	\right.
\end{equation}

Suppose we can express $\gamma(\cdot)$ as an explicit function of $\bar{X}(\cdot)$ as follows:
\begin{equation}\label{LEQ-gamma-opti X}
	\gamma(\cdot)=\frac{\alpha_1(\cdot)}{2}\bar{X}^2(\cdot)+\alpha_2(\cdot)\bar{X}(\cdot)+\alpha_3(\cdot),
\end{equation}
where $\alpha_1(\cdot)$, $\alpha_2(\cdot)$ and $\alpha_3(\cdot)$ are deterministic functions such that $\alpha_1(T)=S_1$, $\alpha_2(T)=0$ and $\alpha_3(T)=\frac{1}{2}S_2Y(0)^2=\frac{1}{2}S_2(\beta_1(0)x_0+\beta_2(0))^2$.

In view of (\ref{LEQ-SDE-opti X}), applying It\^o's formula to (\ref{LEQ-gamma-opti X}) and identifying the coefficients of diffusion terms, we conclude that
\begin{equation}\label{LEQ-kappa}
	\kappa(\cdot)=\sigma(\cdot)\big[\alpha_1(\cdot)X(\cdot)+\alpha_2(\cdot)\big],\quad \mathbb{P}\mbox{-}a.s..
\end{equation}
Substituting (\ref{LEQ-kappa}) into the drift term, and using the completion-of-squares technique, the following equality is obtained:
\begin{equation}
	\begin{aligned}
		0&=\Big[\frac{1}{2}\dot{\alpha}_1+\alpha_1(A_1+B_1\beta_1)+\frac{1}{2}(R_{11}+2R_{12}\beta_1+R_{22}\beta_1^2)-\frac{1}{2}R_{44}^{-1}(\alpha_1D_1+R_{14}+R_{24}\beta_1)^2\\
        &\quad\ +\frac{\theta}{2}\sigma^2\alpha_1^2\Big]X^2+\Big[\dot{\alpha}_2+\alpha_2(A_1+B_1\beta_1)+\alpha_1(B_1\beta_2+C_1\beta_1\sigma+b)\\
		&\quad\ +R_{12}\beta_2+R_{13}\beta_1\sigma+R_{22}\beta_1\beta_2+R_{23}\beta_1^2\sigma+\theta\sigma^2\alpha_1\alpha_2\\
		&\quad\ -R_{44}^{-1}(\alpha_1D_1+R_{14}+R_{24}\beta_1)(\alpha_2D_1+R_{24}\beta_2+R_{34}\beta_1\sigma)\Big]X\\
		&\ \ +\Big[\dot{\alpha}_3+\frac{1}{2}\alpha_1\sigma^2+\alpha_2(B_1\beta_2+C_1\beta_1\sigma+b)+\frac{1}{2}R_{22}\beta_2^2+R_{23}\sigma\beta_1\beta_2+\frac{1}{2}R_{33}\sigma^2\beta_1^2\\
		&\quad\ +\frac{\theta}{2}\sigma^2\alpha_2^2-\frac{1}{2}R_{44}^{-1}(\alpha_2D_1+R_{24}\beta_2+R_{34}\beta_1\sigma)\Big]\\
		&\ \ +\frac{1}{2}R_{44}\Big\{u+R_{44}^{-1}\big[(\alpha_1D_1+R_{14}+R_{24}\beta_1)X+(\alpha_2D_1+R_{24}\beta_2+R_{34}\beta_1\sigma)\big]\Big\}^2.
	\end{aligned}
\end{equation}
From above, by simply calculating, we can deduce the feedback representation of $\bar{u}(\cdot)$:
\begin{equation}\label{LEQ-feedback-control}
	\bar{u}=-R_{44}^{-1}\big[(\alpha_1D_1+R_{14}+R_{24}\beta_1)\bar{X}+(\alpha_2D_1+R_{24}\beta_2+R_{34}\beta_1\sigma)\big],\quad a.e.,\quad \mathbb{P}\mbox{-}a.s.,
\end{equation}
as well as a risk-sensitive Riccati equation along with two risk-sensitive ODEs:
\begin{equation}\label{LEQ-alpha1}
	\left\{
	\begin{aligned}
		&\dot{\alpha}_1(t)+2\mathcal{C}_1(t)\alpha_1(t)+2\mathcal{C}_2(t)\beta_1(t)+2\mathcal{C}_3(t)\alpha_1(t)\beta_1(t)+\mathcal{C}_4(t)\alpha_1^2(t)\\
		&\quad +\mathcal{C}_5(t)\beta_1^2(t)+\mathcal{C}_6(t)=0,\quad t\in[0,T],\\
		&\alpha_1(T)=S_1,
	\end{aligned}
	\right.
\end{equation}
\begin{equation}\label{LEQ-alpha2}
\hspace{-1.6cm}	\left\{
	\begin{aligned}
		&\dot{\alpha}_2(t)+\mathcal{C}_7(t)\alpha_2(t)+\mathcal{C}_8(t)\beta_2(t)+\mathcal{C}_9(t)=0,\quad t\in[0,T],\\
		&\alpha_2(T)=0,
	\end{aligned}
	\right.
\end{equation}
\begin{equation}\label{LEQ-alpha3}
\hspace{-5.7cm}	\left\{
	\begin{aligned}
		&\dot{\alpha}_3(t)+\mathcal{C}_{10}(t)=0,\quad t\in[0,T],\\
		&\alpha_3(T)=\frac{1}{2}S_2(\beta_1(0)x_0+\beta_2(0))^2,
	\end{aligned}
	\right.
\end{equation}
where
\begin{equation*}
	\begin{aligned}
		\mathcal{C}_1&:=A_1-R^{-1}_{44}D_1R_{14},\quad \mathcal{C}_2:=R_{12}-R^{-1}_{44}R_{14}R_{24},\quad \mathcal{C}_3:=B_1-R^{-1}_{44}D_1R_{24},\\
        \mathcal{C}_4&:=\theta\sigma^2-R^{-1}_{44}D_1^2,\quad \mathcal{C}_5:=R_{22}-R^{-1}_{44}R^2_{24},\quad \mathcal{C}_6:=R_{11}-R^{-1}_{44}R^2_{14},\\
		\mathcal{C}_7&:=A_1+B_1\beta_1+\theta\sigma^2\alpha_1-R^{-1}_{44}D_1(D_1\alpha_1+R_{14}+R_{24}\beta_1),\\
		\mathcal{C}_8&:=B_1\alpha_1+R_{12}+R_{22}\beta_1-R^{-1}_{44}R_{24}(D_1\alpha_1+R_{14}+R_{24}\beta_1),\\
		\mathcal{C}_9&:=C_1\sigma\alpha_1\beta_1+\alpha_1b+R_{13}\sigma\beta_1+R_{23}\sigma\beta_1^2-R^{-1}_{44}R_{34}\sigma(D_1\alpha_1+R_{14}+R_{24}\beta_1)\beta_1,\\
		\mathcal{C}_{10}&:=\frac{1}{2}\alpha_1\sigma^2+\alpha_2(B_1\beta_2+C_1\beta_1\sigma+b)+\frac{1}{2}R_{22}\beta_2^2+R_{23}\sigma\beta_1\beta_2+\frac{1}{2}R_{33}\sigma^2\beta_1^2\\
		&\quad +\frac{\theta}{2}\sigma^2\alpha_2^2-\frac{1}{2}R_{44}^{-1}(\alpha_2D_1+R_{24}\beta_2+R_{34}\beta_1\sigma)^2.
	\end{aligned}
\end{equation*}

Taking the state feedback representation (\ref{LEQ-feedback-control}) and the decoupling relation (\ref{LEQ-decouple-state}) into (\ref{LEQ-state-equation}), the optimal dynamics solves the linear FBSDE:
\begin{equation}
	\left\{
	\begin{aligned}
		d\bar{X}(t)&=[\mathcal{A}_1(t)\bar{X}(t)+\mathcal{B}_1(t)]\,dt+\sigma(t)\,dW(t),\\
		-d\bar{Y}(t)&=[\mathcal{A}_2(t)\bar{X}(t)+\mathcal{B}_2(t)]\,dt-\bar{Z}(t)\,dW(t),\quad t\in[0,T],\\
		X(0)&=x_0,\quad Y(T)=G\bar{X}(T),
	\end{aligned}
	\right.
\end{equation}
where
\begin{equation*}
	\begin{aligned}
		\mathcal{A}_1&:=A_1+B_1\beta_1-R_{44}^{-1}D_1(\alpha_1D_1+R_{14}+R_{24}\beta_1),\\
		\mathcal{B}_1&:=B_1\beta_2+C_1\beta_1\sigma+b-R_{44}^{-1}D_1(\alpha_2D_1+R_{24}\beta_2+R_{34}\beta_1\sigma),\\
		\mathcal{A}_2&:=A_2+B_2\beta_1-R_{44}^{-1}D_2(\alpha_1D_1+R_{14}+R_{24}\beta_1),\\
		\mathcal{B}_2&:=B_2\beta_2+C_2\beta_1\sigma+g-R_{44}^{-1}D_2(\alpha_2D_1+R_{24}\beta_2+R_{34}\beta_1\sigma).
	\end{aligned}
\end{equation*}
Applying It\^o's formula to (\ref{LEQ-decouple-state}) and identifying the coefficients of drift terms, we have
\begin{equation}
	\big[\dot{\beta}_1(t)+\beta_1(t)\mathcal{A}_1(t)+\mathcal{A}_2(t)\big]\bar{X}(t)+\dot{\beta}_2(t)+\beta_1(t)\mathcal{B}_1(t)+\mathcal{B}_2(t)=0,\quad a.e..
\end{equation}
Let the coefficients for $\bar{X}(\cdot)$ and the constant term be 0, then the decoupling Riccati equation and ODE that $\beta_1(\cdot)$ and $\beta_2(\cdot)$ satisfy take the following form:
\begin{equation}\label{LEQ-beta1}
\hspace{-1.2cm}	\left\{
	\begin{aligned}
		&\dot{\beta}_1(t)+\mathcal{C}_{11}(t)\beta_1(t)-\mathcal{C}_{12}(t)\alpha_1(t)-\mathcal{C}_{13}(t)\alpha_1(t)\beta_1(t)\\
		&\quad +\mathcal{C}_{14}(t)\beta_1^2(t)+\mathcal{C}_{15}(t)=0,\quad t\in[0,T],\\
		&\beta_1(T)=G,
	\end{aligned}
	\right.
\end{equation}
and
\begin{equation}\label{LEQ-beta2}
	\left\{
	\begin{aligned}
		&\dot{\beta}_2(t)+\mathcal{C}_{16}(t)\beta_2(t)-\mathcal{C}_{17}(t)\alpha_2(t)+\mathcal{C}_{18}(t)=0,\quad t\in[0,T],\\
		&\beta_2(T)=0,
	\end{aligned}
	\right.
\end{equation}
respectively, where
\begin{equation*}
	\begin{aligned}
		\mathcal{C}_{11}&=A_1+B_2-R^{-1}_{44}(D_1R_{14}+D_2R_{24}),\quad \mathcal{C}_{12}=R^{-1}_{44}D_1D_2,\\
        \mathcal{C}_{13}&=R^{-1}_{44}D_1^2,\quad \mathcal{C}_{14}=B_1-R^{-1}_{44}D_1R_{24},\quad \mathcal{C}_{15}=A_2-R^{-1}_{44}D_2R_{14},\\
        \mathcal{C}_{16}&=B_1\beta_1+B_2-R^{-1}_{44}R_{24}(D_1\beta_1+D_2),\quad \mathcal{C}_{17}=R^{-1}_{44}D_1(D_1\beta_1+D_2),\\ \mathcal{C}_{18}&=(C_1\beta_1+C_2)\sigma\beta_1+b\beta_1+g-R^{-1}_{44}R_{34}\sigma\beta_1(D_1\beta_1+D_2).
	\end{aligned}
\end{equation*}

\begin{Remark}\label{Remark-Riccati-solv}
	Let us give some remarks on the coupled feature of the above equations. In fact, if the coupled Riccati equations (\ref{LEQ-alpha1}), (\ref{LEQ-beta1}) admit unique solutions $\alpha_1(\cdot)$, $\beta_1(\cdot)$ respectively, then $\mathcal{C}_7(\cdot)$, $\mathcal{C}_8(\cdot)$, $\mathcal{C}_9(\cdot)$, $\mathcal{C}_{16}(\cdot)$, $\mathcal{C}_{17}(\cdot)$ and $\mathcal{C}_{18}(\cdot)$ become determined functions. As a result, (\ref{LEQ-alpha2}) and (\ref{LEQ-beta2}) become coupled first-order linear ODEs, whose solvability are easy to get. Finally, (\ref{LEQ-alpha3}) is also solvable since $\mathcal{C}_{10}(\cdot)$ is known. A local solvability result of (\ref{LEQ-alpha1}) and (\ref{LEQ-beta1}) will be given later by Picard-Lindel$\ddot{o}$f's Theorem.
\end{Remark}

Now we present the main result of this section.

\begin{mythm}\label{LEQ-Thm}
	Suppose Assumption \ref{LEQ-assum} holds. Let (\ref{LEQ-alpha1})-(\ref{LEQ-alpha3}), (\ref{LEQ-beta1}) and (\ref{LEQ-beta2}) have unique solutions $\alpha_1(\cdot)$, $\alpha_2(\cdot)$, $\alpha_3(\cdot)$, $\beta_1(\cdot)$ and $\beta_2(\cdot)$ respectively. Then the LEQ problem given by (\ref{LEQ-state-equation}) and (\ref{LEQ-csot functional}) has a unique optimal feedback control:
\begin{equation}\label{unique optimal feedback control}
		\begin{aligned}
			\bar{u}(t)=&-R_{44}^{-1}(t)\Big\{\big[\alpha_1(t)D_1(t)+R_{14}(t)+R_{24}(t)\beta_1(t)\big]\bar{X}(t)\\
			&\qquad +\alpha_2(t)D_1(t)+R_{24}(t)\beta_2(t)+R_{34}(t)\beta_1(t)\sigma(t)\Big\},\quad a.e.,\quad \mathbb{P}\mbox{-}a.s..
		\end{aligned}
\end{equation}
Furthermore, the optimal cost is
\begin{equation}
	J(\bar{u}(\cdot))=\exp\bigg\{\theta\bigg(\frac{1}{2}\alpha_1(0)x_0^2+\alpha_2(0)x_0+\alpha_3(0)\bigg)\bigg\}.
\end{equation}
\end{mythm}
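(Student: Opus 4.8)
The plan is to proceed by verification: I would confirm that the feedback control (\ref{unique optimal feedback control}) together with the decoupling (\ref{LEQ-decouple-state}) produces a well-posed closed-loop system, identify the auxiliary process $\gamma(\cdot)$ explicitly, read off the optimal cost, and finally prove that no admissible control does better. For the first step, inserting the candidate $\bar u$ of (\ref{unique optimal feedback control}) and the ansatz $\bar Y=\beta_1\bar X+\beta_2$, $\bar Z=\beta_1\sigma$ into (\ref{LEQ-state-equation}) reduces the forward dynamics to the linear SDE (\ref{LEQ-SDE-opti X}), which admits a unique strong solution $\bar X(\cdot)$ under Assumption \ref{LEQ-assum}. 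Using that $\beta_1(\cdot),\beta_2(\cdot)$ solve (\ref{LEQ-beta1}) and (\ref{LEQ-beta2}), an application of It\^o's formula to $\beta_1\bar X+\beta_2$ shows that $(\bar X,\bar Y,\bar Z)$ satisfies the backward equation and the terminal constraint $\bar Y(T)=G\bar X(T)$; this verifies that the candidate is admissible (the required $L^8$ moment bound follows from the affine feedback together with standard linear SDE estimates) and that the decoupling holds identically along the optimal trajectory.

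Next I would pin down $\gamma(\cdot)$ and obtain the optimal cost. Since $\alpha_1,\alpha_2,\alpha_3$ solve (\ref{LEQ-alpha1})--(\ref{LEQ-alpha3}), applying It\^o's formula to $\frac{\alpha_1}{2}\bar X^2+\alpha_2\bar X+\alpha_3$ along (\ref{LEQ-SDE-opti X}) recovers exactly the drift and terminal data of the quadratic-growth BSDE (\ref{Equation-gamma-kappa}), with diffusion coefficient $\kappa=\sigma(\alpha_1\bar X+\alpha_2)$ as in (\ref{LEQ-kappa}); by the uniqueness of the solution of (\ref{Equation-gamma-kappa}) this quadratic form is the genuine $\gamma(\cdot)$. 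Because $\zeta(0)=\mathbb{E}[e^{\theta L_T}]=J(\bar u(\cdot))$ while $\zeta(0)=e^{\theta\gamma(0)}$, the stated cost formula follows at once from $\gamma(0)=\tfrac12\alpha_1(0)x_0^2+\alpha_2(0)x_0+\alpha_3(0)$.

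The substantive part is optimality. For an arbitrary admissible $u(\cdot)$ with state $(X,Y,Z)$, I would form the process $\exp\{\theta(\tfrac{\alpha_1}{2}X^2+\alpha_2 X+\alpha_3)+\tfrac{\theta}{2}\int_0^t\langle R\xi,\xi\rangle\,ds\}$ and differentiate it by It\^o's formula. The completion-of-squares identity already displayed in the text shows that, once (\ref{LEQ-alpha1})--(\ref{LEQ-alpha3}), (\ref{LEQ-beta1}) and (\ref{LEQ-beta2}) hold, the drift of the exponent collapses to the nonnegative perfect square $\tfrac{\theta}{2}R_{44}(u-\bar u)^2$ plus a stochastic integral; a Girsanov change of measure, whose Dol\'eans--Dade density is generated by the $\theta\kappa$-type diffusion coefficient so that the local martingale becomes a true martingale, then removes the stochastic integral in expectation and yields $J(u(\cdot))\ge e^{\theta\gamma(0)}=J(\bar u(\cdot))$, with equality precisely when $u=\bar u$ a.e., $\mathbb{P}$-a.s., by the uniform positivity of $R_{44}$. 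I expect the main obstacle to lie exactly here: because the system is fully coupled, the terminal weight $S_2Y(0)^2$ depends on the control through the backward initial value $Y(0)$, so the quadratic-in-$X$ ansatz must be reconciled with this control-dependent boundary term, and one must verify the exponential-integrability (Novikov-type) condition guaranteeing that the Girsanov density is a genuine martingale and that all expectations remain finite under the $L^8$ admissibility bound.
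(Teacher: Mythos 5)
Your proposal follows essentially the same route as the paper's own proof: It\^o's formula applied to the quadratic form $\frac{\alpha_1}{2}(X^u)^2+\alpha_2X^u+\alpha_3$ along an \emph{arbitrary} admissible trajectory, completion of squares via the Riccati equations (\ref{LEQ-alpha1})--(\ref{LEQ-alpha3}), (\ref{LEQ-beta1})--(\ref{LEQ-beta2}), a Girsanov change of measure generated by $\theta\sigma(\alpha_1X^u+\alpha_2)$ to absorb the stochastic integral, and the conclusion $J(u(\cdot))\geq\Upsilon(0)=J(\bar u(\cdot))$ from $\theta>0$ and the uniform positivity of $R_{44}$. The obstacle you flag at the end is real but is equally present in the paper's proof: the identity (\ref{LEQ-initial-terminal-cost-Ito}) silently equates $\frac{S_2}{2}(Y^u(0))^2$ with $\alpha_3(T)=\frac{1}{2}S_2(\beta_1(0)x_0+\beta_2(0))^2$, which holds only along the decoupled trajectory $Y=\beta_1X+\beta_2$ and not for a general admissible $u$, and the martingale (Novikov-type) property of the Girsanov density is likewise left unverified there, so your proposal is no less complete than the published argument.
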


\begin{proof}
For any given admissible control $u(\cdot)$, denoting $(X^u(\cdot),Y^u(\cdot),Z^u(\cdot))$ as the corresponding solution of (\ref{LEQ-state-equation}). Applying It\^o's formula to $\frac{\alpha_1(\cdot)}{2}(X^u(\cdot))^2+\alpha_2(\cdot)X^u(\cdot)+\alpha_3(\cdot)$ and integrating it from $0$ to $T$, it follows that
\begin{equation}\label{LEQ-initial-terminal-cost-Ito}
		\frac{S_1}{2}(X^u(T))^2+\frac{S_2}{2}(Y^u(0))^2=\frac{1}{\theta}\ln\Upsilon(0)+\int_0^T\,d\bigg(\frac{\alpha_1(t)}{2}(X^u(t))^2+\alpha_2(t)X^u(t)+\alpha_3(t)\bigg),
\end{equation}
where $\Upsilon(0):=\exp\big\{\theta\big(\frac{1}{2}\alpha_1(0)x_0^2+\alpha_2(0)x_0+\alpha_3(0)\big)\big\}$ is a constant.

In view of (\ref{LEQ-SDE-opti X}) and (\ref{LEQ-alpha1})-(\ref{LEQ-alpha3}), substituting (\ref{LEQ-initial-terminal-cost-Ito}) into (\ref{LEQ-csot functional}) and using the completion-of-squares technique, the corresponding cost becomes:
\begin{equation*}
	\begin{aligned}
		J(u(\cdot))&=\Upsilon(0)\mathbb{E}\bigg[\exp\bigg\{\theta\int_0^T\bigg[-\frac{\theta}{2}\sigma^2\alpha_1^2(X^u)^2-\theta\sigma^2\alpha_1\alpha_2X^u-\frac{\theta}{2}\sigma^2\alpha_2^2\\
		&\qquad +\frac{1}{2}R_{44}\big\{u+R_{44}^{-1}\big[(\alpha_1D_1+R_{14}+R_{24}\beta_1)X^u+\alpha_2D_1+R_{24}\beta_2+R_{34}\beta_1\sigma\big]\big\}^2\bigg]\,dt\\
		&\qquad +\int_0^T\theta\sigma(\alpha_1X^u+\alpha_2)\,dW(t)\bigg\}\bigg]\\
		&=\Upsilon(0)\mathbb{E}\bigg[\exp\bigg\{\theta\int_0^T\bigg[\frac{1}{2}R_{44}\big\{u+R_{44}^{-1}\big[(\alpha_1D_1+R_{14}+R_{24}\beta_1)X^u\\
		&\qquad +\alpha_2D_1+R_{24}\beta_2+R_{34}\beta_1\sigma\big]\big\}^2\bigg]\,dt\\
		&\qquad +\int_0^T\theta\sigma(\alpha_1X^u+\alpha_2)\,dW(t)-\frac{1}{2}\int_0^T\theta^2\sigma^2(\alpha_1X^u+\alpha_2)^2 \,dt\bigg\}\bigg]\\
		&=\Upsilon(0)\mathbb{E}^u\bigg[\exp\bigg\{\theta\int_0^T\bigg[\frac{1}{2}R_{44}\big\{u+R_{44}^{-1}\big[(\alpha_1D_1+R_{14}+R_{24}\beta_1)X^u\\
		&\qquad +\alpha_2D_1+R_{24}\beta_2+R_{34}\beta_1\sigma\big]\big\}^2\bigg]\,dt\bigg\}\bigg],
	\end{aligned}
\end{equation*}
where $\mathbb{E}^u$ is the expectation with respect to $\mathbb{P}^u$ given by
\begin{equation*}
	\frac{d\mathbb{P}^u}{d\mathbb{P}}:=\exp\bigg\{\int_0^T\theta\sigma(t)\big(\alpha_1(t)X^u(t)+\alpha_2(t)\big)\,dW(t)-\frac{1}{2}\int_0^T\theta^2\sigma^2(t)\big(\alpha_1(t)X^u(t)+\alpha_2(t)\big)^2\,dt\bigg\}.
\end{equation*}

From the above calculation, it is obvious that for $\bar{u}(\cdot)$ given by (\ref{unique optimal feedback control}), the related optimal cost is $J(\bar{u}(\cdot))=\Upsilon(0)$.
Hence, for any given admissible $u(\cdot)$, we can conclude that
\begin{equation*}
	J(u(\cdot))\geq J(\bar{u}(\cdot)).
\end{equation*}
The proof is complete.
\end{proof}

As evident from the previous discussions, the optimal control and optimal cost depend heavily on solutions of (\ref{LEQ-alpha1})-(\ref{LEQ-alpha3}) and (\ref{LEQ-beta1})-(\ref{LEQ-beta2}). From Remark \ref{Remark-Riccati-solv}, we only need to discuss the unique solvability of coupled Riccati equations (\ref{LEQ-alpha1}) and (\ref{LEQ-beta1}). Noting that (\ref{LEQ-alpha1}) and (\ref{LEQ-beta1}) are not in classical form, we do not have results about their solvability in general now. However, next we investigate the local solvability of them. Set
\begin{equation*}
		K:=\begin{bmatrix}
			\alpha_1&0  \\
			0&\beta_1
		\end{bmatrix},\
		J:=\begin{bmatrix}
			0&1  \\
			1&0
		\end{bmatrix},\
		K_T:=\begin{bmatrix}
			S_1&0  \\
			0&G
		\end{bmatrix},\
		\mathcal{D}_1:=\begin{bmatrix}
			\mathcal{C}_1&0  \\
			0&\frac{1}{2}\mathcal{C}_{11}
		\end{bmatrix},\
		\mathcal{D}_2:=\begin{bmatrix}
			2\mathcal{C}_2&0  \\
			0&-\mathcal{C}_{12}
		\end{bmatrix},
\end{equation*}
\begin{equation*}
		\mathcal{D}_3:=\begin{bmatrix}
			2\mathcal{C}_3&0  \\
			0&-\mathcal{C}_{13}
		\end{bmatrix},\
		\mathcal{D}_4:=\begin{bmatrix}
			\mathcal{C}_4&0  \\
			0&\frac{1}{2}\mathcal{C}_{14}
		\end{bmatrix},\
		\mathcal{D}_5:=\begin{bmatrix}
			\mathcal{C}_5&0  \\
			0&0
		\end{bmatrix},\
		\mathcal{D}_6:=\begin{bmatrix}
			\mathcal{C}_6&0  \\
			0&\mathcal{C}_{15}
		\end{bmatrix}.
\end{equation*}
Then equations (\ref{LEQ-alpha1}) and (\ref{LEQ-beta1}) are equal to the following
\begin{equation}\label{LEQ_Riccati-HighD}
		\left\{
		\begin{aligned}
			\dot{K}&(t)+\mathcal{D}_1(t)K(t)+K(t)\mathcal{D}_1(t)+\mathcal{D}_2(t)JK(t)J+K(t)J\mathcal{D}_3(t)K(t)J\\
			&+K\mathcal{D}_4(t)K(t)+JK(t)J\mathcal{D}_5(t)JK(t)J+\mathcal{D}_6(t)=0,\quad t\in[0,T],\\
			K&(T)=K_T.
		\end{aligned}
		\right.
\end{equation}

Since $\mathcal{G}(t,K):=\mathcal{D}_1K+K\mathcal{D}_1+\mathcal{D}_2JKJ+KJ\mathcal{D}_3KJ+K\mathcal{D}_4K+JKJ\mathcal{D}_5JKJ+\mathcal{D}_6$
is continuous in $t$ and uniformly locally Lipschitz continuous in $K$. According to Picard-Lindel$\ddot{o}$f's Theorem (See, for example, Theorem 15.2 in Agarwal and O'Regan \cite{AO08}), we know that there
exists an interval $[T-\eta,T]\subseteq[0,T]$ such that the terminal value problem (\ref{LEQ_Riccati-HighD}) as well as (\ref{LEQ-alpha1}) and (\ref{LEQ-beta1}) has a unique solution on $[T-\eta,T]$.

\section{Concluding remarks}

In this paper, we have obtained a global maximum principle for a kind of risk-sensitive optimal control problem for fully coupled forward-backward stochastic systems. The control variable can enter the diffusion term of the state equation and the control domain is not necessarily convex. A new global maximum principle is obtained without assuming that the value function is smooth. The maximum condition, the first- and second-order adjoint equations heavily depend on the risk-sensitive index. An optimal control problem with a fully coupled linear forward-backward stochastic system and an exponential-quadratic cost functional is discussed. The optimal feedback control and optimal cost are obtained by using Girsanov's Theorem and completion-of-squares approach via risk-sensitive Riccati equations. A local solvability result of coupled risk-sensitive Riccati equations (\ref{LEQ-alpha1}) and (\ref{LEQ-beta1}) is given by Picard-Lindel$\ddot{o}$f's Theorem.

Possible future research includes the partially observed problems, mean field type control and differential games. We will consider these topics in the near future.

\end{document}